\newcommand{\Z}{\mathbb Z}
\newcommand{\N}{\mathbb{N}}
\newtheorem{thm}{Theorem}[section]
\newtheorem{lem}[thm]{Lemma}
\newtheorem{prop}[thm]{Proposition}
\theoremstyle{remark}
\newtheorem{rem}{\bf Remark}[section]
\theoremstyle{definition}
\newtheorem{defn}[thm]{Definition}
\newtheorem{conj}[thm]{Conjecture}
\numberwithin{equation}{section}
\begin{document}
\title[Anderson localization for NLSE]{Long-time Anderson Localization for the  Nonlinear Schr\"odinger Equation Revisited}

\author[H.C.]{Hongzi Cong}
\address[H. Cong]{School of Mathematical Sciences, Dalian University of Technology, Dalian 116024, China}
\email{conghongzi@dlut.edu.cn}

\author[Y.S.]{Yunfeng Shi}
\address[Y. Shi] {College of Mathematics,
Sichuan University,
Chengdu 610064,
China}
\email{yunfengshi@scu.edu.cn, yunfengshi18@gmail.com}
\author[Z.Z.]{Zhifei Zhang}
\address[Z. Zhang] {School of Mathematical Sciences,
Peking University,
Beijing 100871,
China}
\email{zfzhang@math.pku.edu.cn}

\date{\today}

\keywords{Anderson localization, Birkhoff normal form, Nonlinear random Schr\"odinger equation}


\begin{abstract}
In this paper, we confirm the conjecture of Wang and Zhang ({J. Stat. Phys. 134 (5-6):  953--968, 2009}) in a long time scale, i.e.,
 the displacement of the wavefront for $1D$ nonlinear random Schr\"odinger equation is of logarithmic order in time $|t|$.
\end{abstract}

\maketitle

\section{Introduction}
Anderson localization  was originally discussed by Anderson \cite{And58} in the context of  wave
propagation of  non-interacting quantum particles through random disordered media. Since this seminal work,  a great deal of attention has been paid to this topic both in physics and mathematics community. The Anderson model is a discrete linear Schr\"odinger operator defined on $\ell^2(\mathbb{Z}^d)$
\begin{align}
H_0=-\epsilon_1\Delta+\lambda v_n(\omega)\delta_{nn'},
\end{align}
where $\Delta$ is the discrete Laplacian: $(\Delta q)_n=\sum\limits_{|e|_1=1}q_{n+e}$ (with $|e|_1=\sum\limits_{i=1}^d|e_i|$) and $(v_{n}(\omega))_{n\in \mathbb{Z}^d}$ is  a family of identical independent distributed (i.i.d.) random variables with uniform distribution on $[0,1]$ (i.e., ${ d}v_n(\omega)=\chi_{[0,1]}(\omega_n){d}\omega_n$). The constant $\epsilon_1\geq 0$  is the coupling for describing the strength of random disorder. We say that $H_0$ has Anderson localization (AL) if its spectrum is pure point with exponentially decaying eigenfunctions. In many cases, we are  interested in the dynamics of the time dependent (linear) Schr\"odinger equation  associated with  $H_0$
\begin{align}\label{lse}
\mathbf{i}\dot{q}=H_0q,
\end{align}
where $q\in \ell^2(\mathbb{Z}^d)$. The standard spectral theorem of self-adjoint operators deduces that \eqref{lse} has a unique global solution $q(t)=e^{-\mathbf{i}t H_0}q(0)$ for each initial data $q(0)\in\ell^2(\mathbb{Z}^d)$. The evolution operator $e^{-\mathbf{i}tH_0}$ is unitary for each $t\in \mathbb{R}$ and thus preserves the $\ell^2$-norm. If we want to know more precise information about the wave packet propagation, we can introduce the concept of dynamical localization (DL) for $H_0$: for any $\alpha>0$ and $q(0)\in\ell^2(\mathbb{Z}^d)$,
\begin{align}\label{dl}
\sup_{t\in \mathbb{R}}\sum_{n\in \mathbb{Z}^d}(1+|n|)^{2\alpha}|q_n(t)|^2<\infty,
\end{align}
where $q(t)=e^{-\mathbf{i}tH_0}q(0)$ and $|n|=\max\limits_{1\leq i\leq d}|n_i|$. The DL implies that the particle is concentrated near the origin uniformly for all time.

The first mathematical rigorous proof of localization for random operators was due to Goldsheid-Molchanov-Pastur \cite{GMP77} for $1D$ continuous random Schr\"odinger operators. In high dimensions,  Fr\"ohlich-Spencer \cite{FS83} proved, either at high disorder (i.e., $\epsilon_1\ll1$) or low energy, the absence of diffusion for Anderson model by developing the celebrated multi-scale analysis (MSA) method. Based on MSA of \cite{FS83},  \cite{FMSS85,DS85,SW86} finally obtained the Anderson localization at either high disorder or low energy. An alternative method for the proof of localization for random operators, known as the fractional moment method (FMM), was developed by Aizenman-Molchanov \cite{AM93}.  Remarkably, by employing FMM, Aizenman proved the first DL for Anderson model  \cite{Aiz94}.

When a nonlinear perturbation is added in \eqref{lse}, we are led to the study of the so called nonlinear Schr\"odinger equations with a random potential. In this paper, we focus on the following $1D$ nonlinear Schr\"odinger equation (NLSE)
\begin{equation}\label{010701}
\mathbf{i} \dot{q}_j=\epsilon_1(q_{j-1}+q_{j+1})+v_j(\omega)q_j+\epsilon_2|q_j|^2q_j,
\end{equation}
and in particular the solution  $q(t)$ of \eqref{010701} with an initial  state $q(0)\in\ell^2(\mathbb{Z})$ as $t\to\infty$. The NLSE also has  important applications in a variety of physical systems, especially the Bose-Einstein Condensation \cite{DGPS} (we refer  to \cite{FKS12} for  an excellent review on NLSE). Since in nonlinear case the spectral theorem becomes invalid,  the  study of AL for a NLSE seems vacuous. However, in linear case the famous RAGE Theorem (see \cite{K08} and \cite{AW15}) claims that $H_0$ has pure point spectrum if and only if, for any $q(0)\in\ell^2(\mathbb{Z})$,
\begin{align}\label{rage}
\lim_{N\to\infty}\sup_{t\in \mathbb{R}}\sum_{|j|>N}|q_j(t)|^2=0,
\end{align}
where $q(t)=e^{-\mathbf{i}tH_0}q(0)$. Thus, it is natural to define AL for a NLSE via \eqref{rage} by noting that \eqref{010701} is globally well-posed for any initial data belongs to $\ell^2(\mathbb{Z})$.

{The numerical results found by Pikovsky-Shepelyansky \cite{10}  and by Flach and coworkers \cite{12,14,67,66} suggested that an initially localized wavepacket  spreads eventually in the presence of nonlinearity. Particularly, in the weak nonlinearity case (i.e., $0<\epsilon_2\ll1$), it was numerically established in \cite{12} that AL occurs up to some time scale $T_{\epsilon_2}>0$  which increases with decreasing $\epsilon_2$. Moreover, for $t>T_{\epsilon_2}$,  the wavepacket starts to spread sub-diffusively. However, all rigorous theories predict that the spreading cannot be faster than logarithmic in time. This seems due to the fact that numerical calculations for chaotic systems are quite sensitive to numerical errors (see \cite{FKS12} for details).}

The first rigorous result towards nonlinear AL for NLSE with i.i.d. random potential was obtained by Fr\"ohlich-Spencer-Wayne \cite{FSW86}: they showed that, with high probability and weak nonlinearity, any  \textit{sup-exponentially}  localized  initial state always	stayed in a full dimensional KAM  tori.  Their proof is based on an extension of the KAM techniques. Later, if the initial state is \textit{polynomially} localized, by using Birkhoff normal form method, Benettin-Fr\"ohlich-Giorgilli \cite{BFG88} got that  the propagation remains localized  in very \textit{long-time}  for some $dD$  lattice nonlinear oscillation equations with i.i.d. Gaussian random potential.  Recently, Bourgain-Wang \cite{BW08} constructed many quasi-periodic solutions for some random NLSE by combining Nash-Moser iteration and the improved MSA. {We would also like to mention the works of Yuan \cite{Yua02} and Geng-You-Zhao \cite{GYZ14}\footnote{\cite{GYZ14} studied the  NLSE with quasi-periodic potentials.}, in which the persistence of quasi-periodic solutions for some $1D$ discrete nonlinear equations was proved  via the KAM type iterations scheme.}

The most important result for nonlinear AL with  non-localized initial state was due to  Wang-Zhang \cite{WZ09}: they proved the first  ``truly'' \textit{long-time} AL for the $1D$ NLSE. More precisely, they established that {\it Given $A\geq 2,\delta>0$, let $q(0)\in\ell^2(\mathbb{Z})$ be any initial state satisfying $\sum\limits_{|j|>j_0>0}|q_j(0)|^2\leq \delta$. Then there exist $\varepsilon=\varepsilon(A)>0, C=C(A)>0$ such that for $0<\epsilon=\epsilon_1+\epsilon_2\leq \varepsilon$ and $t\leq \delta C^{-1}\epsilon^{-A}$,
\begin{align*}
\sum\limits_{|j|> j_0+N}|q_j(t)|^2\leq 2\delta
\end{align*}
with probability at least $1-\exp\left(-\frac{j_0}{N}e^{-2N\epsilon^{(CA)^{-1}}}\right)$ and {$N=N(A)\geq A^2$}.
}
In this theorem, they required actually both high disorder  and weak nonlinearity.  The proof depends on some type of Birkhoff normal form borrowed from Bourgain-Wang \cite{BW07}.  Remarkably, Fishman-Krivolapov-Soffer \cite{FKS08} obtained the \textit{long-time} \textit{exponentially} DL (i.e., with $(1+|n|)^{2\alpha}$ being replaced by the exponential bound in \eqref{dl}) of time $t\leq \epsilon_2^{-2}$ under just weak nonlinearity assumption. Their proof differs from that of Wang-Zhang and is based on perturbation theory combined with FMM of Aizenman-Molchanov \cite{AM93}. Subsequently, some results of \cite{FKS08} have been improved to time of order $\epsilon_2^{-A}$ for any $A\geq2$ \cite{FKS09} by the same authors, but the proof is  partly rigorous:  in some parts it relies on conjectures that they tested numerically.

Wang-Zhang's result mentioned as above indicates that if $\epsilon\leq \varepsilon(A)$,  AL holds of time scale $T_\epsilon\sim\epsilon^{-A}$, and as a result the wavefront $N$ depends on time in the following way
\begin{align}
N\sim (\ln T_\epsilon)^{2+}.
\end{align}
In addition, it was proven in \cite{W08} that the growth of Sobolev norms is at most logarithmic in $t$. These enable them to raise  the conjecture:
\begin{conj}[\cite{WZ09}]
As $t\rightarrow \infty, $ the displacement of the wavefront $N$ is of order $t^{0^{+}}$ (possibly logarithmic).
\end{conj}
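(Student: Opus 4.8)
The plan is to prove the conjecture in the quantitative form asserted in the abstract: for $\epsilon=\epsilon_1+\epsilon_2$ below a threshold (high disorder, weak nonlinearity) and with probability close to one, an initial state $q(0)$ with $\sum_{|j|>j_0}|q_j(0)|^2\le\delta$ satisfies $\sum_{|j|>j_0+N}|q_j(t)|^2\le 2\delta$ for all $|t|\le T$, where the wavefront displacement obeys $N\sim\ln|t|$, i.e. $T\sim e^{cN}$. The first step is to write \eqref{010701} as a Hamiltonian system on $\ell^2(\Z)$ with $H=H_0+\epsilon_2 P$, $P=\tfrac12\sum_j|q_j|^4$, where $\sum_j|q_j|^2$ is conserved (giving global well-posedness for $\ell^2(\Z)$ data). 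Under the high-disorder hypothesis $H_0$ has Anderson localization, so it admits an orthonormal eigenbasis $\{\phi_m\}_{m\in\Z}$, eigenvalues $\{\mu_m\}$, with $|\phi_m(j)|\lesssim e^{-\gamma|j-x_m|}$ for localization centers $x_m$. In the eigencoordinates $\xi=(\xi_m)$ the linear part is $\sum_m\mu_m|\xi_m|^2$ and --- the structural input --- $P$ becomes $\sum c_{m_1m_2m_3m_4}\xi_{m_1}\bar\xi_{m_2}\xi_{m_3}\bar\xi_{m_4}$ whose coefficients decay exponentially in the diameter of $\{x_{m_1},x_{m_2},x_{m_3},x_{m_4}\}$.

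The second step is a Birkhoff normal form carried out in weighted norms that simultaneously measure analyticity (smallness in $\epsilon_2$) and the spatial reach of each monomial. At step $r$ one solves a homological equation to eliminate the non-resonant monomials of the current leading order, keeping the resonant ones, namely those for which the divisor $\mu_{m_1}-\mu_{m_2}+\mu_{m_3}-\mu_{m_4}$ (and its higher-order analogues) is small. The homological equation needs a quantitative lower bound for these combinations, uniformly over index tuples whose centers lie in a box of size $\sim N$; this is furnished by a probabilistic separation estimate for the random eigenvalues (an eigenvalue-variation / Wegner-type argument on $v_j(\omega)$), at the cost of discarding an event of small measure at each scale, with total probability still of the Wang--Zhang type $1-\exp\big(-c\,j_0 N^{-1}e^{-cN\epsilon^{c}}\big)$. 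Crucially, each normal-form step enlarges the effective coupling range only by a bounded amount, and after $r$ steps the remainder is $O((C\epsilon_2)^{r})$ in the weighted norm; taking $r\sim N$ produces a Hamiltonian whose normal-form part depends only on the actions $\{|\xi_m|^2\}$ together with a short-range coupling confined within $O(N)$ of the initial support, and whose remainder flow moves mass by at most $\delta$ over $|t|\le(C\epsilon_2)^{-r}\sim e^{cN}$ --- which is exactly the relation $N\sim\ln|t|$.

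The third step is to translate this back to the dynamics of \eqref{010701}: since the action part leaves $\sum_{|j|>j_0+N}|q_j|^2$ unchanged, and the confined short-range part together with the remainder change it by at most $\delta$, the bound $\sum_{|j|>j_0+N}|q_j(t)|^2\le 2\delta$ follows once one undoes the composition of canonical transformations, which is close to the identity and again has reach $O(N)$, hence preserves the estimate up to adjusting constants. The main obstacle --- and the point where the present argument must improve on \cite{WZ09}, which yields only $N\sim(\ln T)^{2+}$ --- is the coupled control, inside the homological equation, of the small divisors and the exponential off-diagonal decay: one must ensure that the probabilistic lower bound on the quartic eigenvalue combinations holds on the \emph{growing} index set with summable exceptional probability, and that the Birkhoff iteration transports the exponential decay \emph{without degrading its rate}, so that the number of affordable steps is proportional to $N$ rather than to $\sqrt N$. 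It is this sharpened bookkeeping of spatial reach against step count that upgrades Wang--Zhang's $N\sim(\ln T)^{2+}$ to the conjectured $N\sim\ln T$.
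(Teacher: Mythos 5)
Your proposal departs from the paper in a way that is not just cosmetic. The paper never passes to the eigenbasis of $H_0$ and never invokes Anderson localization of $H_0$: the Birkhoff normal form is carried out directly in the lattice variables $q_j$, with the hopping $\epsilon_1\sum(\bar q_j q_{j+1}+q_j\bar q_{j+1})$ thrown into the perturbation $R_1$ alongside the quartic term. Consequently the nonresonance condition is a lower bound on integer combinations $\sum_j k_j v_j$ of the i.i.d.\ uniform potential values (see the $(\epsilon,\alpha,N)$-condition \eqref{122106}), not on quartic (and higher) combinations of Anderson eigenvalues $\mu_{m_1}-\mu_{m_2}+\mu_{m_3}-\mu_{m_4}$. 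This matters: the $v_j$ are independent with explicit density, so the measure estimate (Lemma \ref{lem101}) reduces to trivial volume counts, giving the sharp bound $1-\epsilon^{\alpha/2}$. In your scheme the divisors involve eigenvalues of $H_0$, which are strongly correlated; a Wegner-type bound handles single eigenvalues and a Minami-type bound handles pairs, but the higher-order combinations appearing at step $s$ of the normal form need a quantitatively strong joint separation estimate on a growing index set, and you have not indicated how to obtain it. You would also need to track how the modulated eigenvalues depend on $\omega$ through the composed canonical maps, which the paper handles via the $\|\cdot\|^{\mathcal L}$ seminorm and the Schur/Jacobian estimate \eqref{042101} — there is no analogue in your sketch.

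The second genuine gap is the claimed mechanism of improvement. You assert that the upgrade from Wang--Zhang's $N\sim(\ln T)^{2+}$ to $N\sim\ln T$ comes from pushing the number of normal form steps from $\sim\sqrt N$ to $\sim N$. The paper does the opposite: it keeps $M=[\sqrt N]-1$ steps (Lemma \ref{62} and Theorem \ref{011501}), and the gain comes from (i) replacing $\ell^\infty$ bookkeeping by a weighted $\ell^1$-norm in $\Delta(n)+|n|$ (Definition \ref{122403}) so that the Poisson-bracket and flow estimates (Propositions \ref{090603}, \ref{E1}) close cleanly, and (ii) a Nekhoroshev-style optimization \`a la Benettin--Fr\"ohlich--Giorgilli in which $N$ and $M$ are chosen as \emph{functions of} $\epsilon$ — specifically $N\approx|\ln\epsilon/\ln|\ln\epsilon||^2$, $M\approx\sqrt N$ — so that the smallness condition \eqref{011412} is saturated and the remainder \eqref{030403} becomes $\epsilon\exp(-|\ln\epsilon|^2/(200\ln|\ln\epsilon|))$. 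The reason one cannot simply plug $A\sim|\ln\epsilon|/\ln|\ln\epsilon|$ into Wang--Zhang's statement (as the Remark after Theorem \ref{main} explains) is that their threshold $\varepsilon(A)$ degenerates with $A$; your proposal does not address this and instead asserts a different and unsupported fix. If you want to keep the eigenbasis route you need a concrete higher-order Minami-type estimate; if you want the paper's conclusion, the decisive steps are the $\ell^1$-weighted norm and the $N,M\leftrightarrow\epsilon$ balance, not the step count.
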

The main motivation of the present paper comes from this conjecture. In fact, we prove the following main result.
\begin{thm}\label{main}
Given $\delta>0$, for all initial datum $q(0)\in\ell^2(\mathbb{Z})$, let $j_0\in\mathbb{N}$ be such that
\begin{equation*}
\sum_{|j|>j_0}|q_j(0)|^2<\delta.
\end{equation*}
Fix  $0<\alpha<1/100.$ Then there exists constant $\varepsilon=\varepsilon(\alpha)>0$ such that the following holds: for $0<\epsilon:=\epsilon_1+\epsilon_2<\varepsilon$ and
for all
$$|t|\leq \delta \exp\left(\frac{\left|\ln \epsilon\right|^2}{200\ln\left|\ln\epsilon\right|}\right)$$ one has
\begin{equation*}
\sum_{|j|>j_0+N}|q_{j}(t)|^2<2\delta
\end{equation*}
with probability at least
\begin{align*}
1-{\epsilon^{\alpha/2}},
\end{align*}
where
\begin{equation*}
N=\left|\frac{\ln\epsilon}{200\ln\left|\ln\epsilon\right|}\right|^2.
\end{equation*}
\end{thm}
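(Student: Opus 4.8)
We first outline the strategy of the proof.

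\emph{Hamiltonian formulation and a priori bounds.} The plan is to treat \eqref{010701} as a near\nobreakdash-integrable Hamiltonian system, run a Birkhoff-type normal form adapted to the diagonal random part, extract a quasi-conserved tail mass and propagate it by a bootstrap over the time window in the statement. Equation \eqref{010701} is generated by
\[
H=\sum_j v_j(\omega)|q_j|^2+\epsilon_1\sum_j\big(q_j\bar q_{j+1}+\bar q_jq_{j+1}\big)+\frac{\epsilon_2}{2}\sum_j|q_j|^4
\]
for the standard symplectic structure, so that $H_0=\sum_j v_j(\omega)|q_j|^2$ is diagonal with i.i.d. frequencies $v_j\in[0,1]$ and the site actions $I_j:=|q_j|^2$ are its integrals of motion. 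The hopping term contributes a telescoping sum to $\tfrac{d}{dt}\sum_j|q_j|^2$, so $\|q(t)\|_{\ell^2}$ is conserved; hence $\|q(t)\|_{\ell^\infty}\le\|q(0)\|_{\ell^2}$ for all $t$, which makes $\epsilon_1H_{\mathrm{hop}}+\epsilon_2H_{\mathrm{quart}}$ a genuine perturbation of size $\lesssim\epsilon$ — and near the wavefront the relevant amplitude is in fact only $\sqrt\delta$, a point the bootstrap will use.

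\emph{Randomness, non-resonance, and the choice of scales.} Since the $v_j$ are i.i.d. with bounded density, for a fixed nonzero $\mathbf k\in\Z^{\Z}$ one has the anti-concentration bound $\mathbb P\big(|\sum_j k_jv_j|\le\kappa\big)\lesssim\kappa$. Choosing the normal-form order $n\sim|\ln\epsilon|/(C\ln|\ln\epsilon|)$ — so that $n\ln n\lesssim|\ln\epsilon|$ — and the barrier width $N\sim n^2$, a union bound over the $\mathbf k$ relevant to a degree-$\lesssim n$ normal form localized in a window of width $\lesssim N$ about $\pm(j_0+N)$ shows that, off an event of probability $\lesssim\epsilon^{\alpha/2}$, every such $\mathbf k$ that does not reduce a monomial to a function of the actions satisfies $|\sum_j k_jv_j|>\kappa$. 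The largest $\kappa$ compatible with this probability, together with the normal-form convergence requirement below, is exactly what fixes $n$ and hence the logarithmic-in-$t$ size of the wavefront. The structural gain from the randomness — already central in \cite{WZ09} — is that on this event the surviving (resonant) part of the normal form depends, to the order worked, only on the actions $I_j$, hence Poisson-commutes with every $I_j$ and transports no mass.

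\emph{Normal form and quasi-conserved tail mass.} On the good event, compose $n$ near-identity symplectic maps $\Phi_k$, each the time-one flow of a Hamiltonian $\chi_k$ obtained by solving a homological equation with divisors bounded below by $\kappa$, working in a norm that weights a monomial by $e^{\rho(\mathrm{diam}+\mathrm{deg})}$; the usual combinatorial losses of this iteration are what cap $n$. After $\Phi=\Phi_1\circ\cdots\circ\Phi_n$ one has $H=H_0+Z+R$ with $Z=Z(I)$ (so $H_0+Z$ preserves every $I_j$), $\|R\|\lesssim\epsilon^{n+1}$, $R$ of spatial range $O(n)\ll N$, and $\Phi$ differing from the identity by $O(\epsilon)$ while spreading mass by $O(n)$. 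In the new coordinates $\dot I_j=\{I_j,R\}$, so the tail functional $\mathcal M_L:=\sum_{|j|>L}I_j$ changes only through monomials of $R$ straddling $|j|=L$; taking $L=j_0+N/2$ and using $N\gg n$, no such monomial reaches the bulk $\{|j|\le j_0\}$, so every straddling monomial is supported in the far field, where the bootstrap hypothesis $\mathcal M_L<2\delta$ forces amplitude $\le(2\delta)^{1/2}$.

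\emph{Bootstrap and conclusion.} On the maximal interval where $\mathcal M_{j_0+N/2}<2\delta$ one then gets $|\dot{\mathcal M}_{j_0+N/2}|\lesssim\epsilon^{n+1}\delta$, the combinatorial count of straddling monomials being absorbed into a further power of $\epsilon$ because $n\ln n\lesssim|\ln\epsilon|$; hence $\mathcal M_{j_0+N/2}(t)<2\delta$ survives for $|t|\lesssim\delta\,\epsilon^{-n}\sim\delta\exp\!\big(|\ln\epsilon|^2/(C'\ln|\ln\epsilon|)\big)$, which closes the continuity argument on the stated interval. Transferring back through $\Phi$ — which moves mass by $O(n)\ll N$ and changes it by $O(\epsilon)\ll\delta$ — turns $\mathcal M_{j_0+N/2}<2\delta$ in the new coordinates into $\sum_{|j|>j_0+N}|q_j(t)|^2<2\delta$ in the original ones. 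The main obstacle is the quantitative bookkeeping in the middle two steps: letting the normal-form order $n$ grow with $\epsilon$ — which is precisely what lifts the polynomial time scale $\epsilon^{-A}$ of \cite{WZ09} to the super-polynomial $\exp(|\ln\epsilon|^2/\ln|\ln\epsilon|)$ — while holding both the combinatorial growth of the normal-form iteration and the union-bound cost of the small divisors inside the single budget $n\ln n\lesssim|\ln\epsilon|$, and choosing the spatial weight so that the remainder's couplings across the width-$N$ barrier remain negligible over the whole time span.
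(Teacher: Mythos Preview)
Your outline follows the paper's architecture: a localized Birkhoff normal form of order $M\sim|\ln\epsilon|/\ln|\ln\epsilon|$ inside barriers of width $N\sim M^2$ about $\pm j_0$, non-resonance secured by a union bound of cost $\epsilon^{\alpha/2}$, and control of the tail-mass derivative through the remainder. The scale choices and the overall mechanism are right.

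There is, however, a real gap in how you describe the remainder. You assert ``$H=H_0+Z+R$ with $Z=Z(I)$, $\|R\|\lesssim\epsilon^{n+1}$, $R$ of spatial range $O(n)$''. A \emph{localized} normal form cannot produce this: since each generator $F_s$ is supported in the barrier, the Hamiltonian outside is untouched, so the hopping terms $\epsilon_1(q_j\bar q_{j+1}+\bar q_jq_{j+1})$ for $j$ far from $\pm j_0$ sit in $R$ with size $\epsilon$, not $\epsilon^{n+1}$; and the iterated Poisson brackets generate monomials of arbitrarily large diameter, not only $O(n)$. What the iteration actually delivers (Theorem~\ref{011501}) is that (a) the part $\widetilde{\mathcal R}$ of $R$ whose support meets the \emph{inner} barrier $A(j_0,N/2)$ is small of order \eqref{030403}, together with (b) a separate decay-in-$(\Delta(n)+|n|)$ estimate \eqref{042302}. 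The derivative $\tfrac{d}{dt}\sum_{|j|>j_0}|\tilde q_j|^2$ is then controlled by a three-way split: monomials touching $A(j_0,N/2)$ are small by (a); monomials avoiding $A(j_0,N/2)$ with $\Delta(n)\ge M+4$ are small by (b); and monomials avoiding $A(j_0,N/2)$ with $\Delta(n)\le M+3$ have support entirely on one side of $\pm j_0$, whence gauge invariance $\sum_j(n_j-n_j')=0$ forces $\sum_{|j|>j_0}(n_j-n_j')=0$ and they contribute nothing. Your sketch collapses all three cases into a single global smallness claim and in particular never isolates the large-diameter piece; without the analogue of \eqref{042302} the argument does not close. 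A secondary inconsistency: your bootstrap hypothesis $\mathcal M_L<2\delta$ with $L=j_0+N/2$ bounds $|\tilde q_j|$ only for $|j|>L$, while the straddling monomials you invoke live in $[L-n,L+n]$, half of which lies in $\{|j|\le L\}$ where the bootstrap gives nothing. The paper avoids any bootstrap, bounding the derivative uniformly and letting $\delta$ enter only through the time integration.
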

\begin{rem}
\begin{itemize}
\item[]
\item[$\bullet$] As an easy corollary, one has for $|t|\leq T_\epsilon=\delta \exp\left(\frac{\left|\ln \epsilon\right|^2}{200\ln\left|\ln\epsilon\right|}\right)$,
\begin{align*}
N(\epsilon)\sim\ln T_\epsilon.
\end{align*}
Moreover, $T_\epsilon\to\infty$ in the exponential rate as $\epsilon\to 0$. This  confirms Wang-Zhang's conjecture in a long time scale.

\item[$\bullet$] Our result can't be derived directly from Wang-Zhang's by choosing $A\sim\frac{\left|\ln \epsilon\right|}{\ln\left|\ln\epsilon\right|}.$  It is because the perturbation $\varepsilon(A)$ in their argument depends sensitively on $A$. In order to improve Wang-Zhang's \textit{polynomial} bound to the \textit{exponential} one, it requires new ideas.
\end{itemize}
\end{rem}

We then outline the proof. The main scheme of our proof  is definitely adapted from Bourgain-Wang \cite{BW07} and Wang-Zhang \cite{WZ09},  which  uses Birkhoff normal form type transformations to
construct barriers centered at some $\pm j_0, j_0>1$ of width $N$, where the terms responsible for propagation are small enough. However, while our localized time is significantly much longer, our
argument can also be viewed as both a clarification and at the same time streamlining of \cite{WZ09}.  This is due to several important technical improvements that we add to Wang-Zhang's scheme:
\begin{itemize}
\item[(1)] One important highlight is that, we make use of $\ell^1$-norm (with an exponential weight) rather than $\ell^\infty$-norm for the Hamiltonian. This will lead to more clear and effective estimate on some key ingredients, such as the Poisson bracket, symplectic transformations and particularly the small divisors when performing the Birkhoff normal form. In addition, we  deal with those elements in  a separated fashion, which makes the proof  more tractable.
\item[(2)] Another issue we want to highlight is that we introduce new ideas originated from  Benettin-Fr\"ohlich-Giorgilli \cite{BFG88} in our proof. In the iteration scheme, we always assume that both the width $N$ of the barriers and the total iteration steps $M$  are  \textit{non-negligible} as compared with the perturbation $\epsilon$.   Then our main result follows from  \textit{optimal} choices of $N,M$ depending on $\epsilon$. To achieve this goal, one needs to take care of all terms in the barriers and thus needs to use the $\ell^1$-norm.
\end{itemize}

The structure of the paper is as follows.  Some important facts on Hamiltonian dynamics, such as the Poisson bracket, symplectic transformation and non-resonant conditions  are presented in \S2. The Birkhoff normal form type theorem is proved in \S3.  The estimate on the probability  when handling the small divisors can be found in \S4.  The proof of our main theorem is finished in \S5.

\section{Structure of the transformed Hamiltonian}
We recast (\ref{010701}) as a Hamiltonian equation
\begin{align*}
\mathbf{i} \dot{q}_j=2\frac{\partial H}{\partial \bar{q}_j},
\end{align*}
where
\begin{align}\label{010703}
H(q,\bar q)=\frac{1}2\left(\sum_{j\in\mathbb{Z}}v_j|q_j|^2+\epsilon_1\sum_{j\in\mathbb{Z}}\left(\bar q_jq_{j+1}+q_j\bar q_{j+1}\right)+\frac12\epsilon_2\sum_{j\in\mathbb{Z}}|q_j|^4\right).
\end{align}
As is well-known, the $\ell^2$-norm of the solution $q(t)$ is conserved, i.e.,
\begin{align*}
\sum_{j\in\mathbb{Z}}\left|q_j(t)\right|^2=\sum_{j\in\mathbb{Z}}\left|q_j(0)\right|^2\quad {\rm for}\  \forall\  t\in\mathbb{R}.
\end{align*}
In order to prove the main result, we need to control the time derivative of the truncated sum of higher modes
\begin{align}\label{022101}
\frac{d}{dt}\sum_{|j|>j_0}\left|q_{j}(t)\right|^2.
\end{align}
In what follows, we will deal extensively with monomials in $q_j$.  Rewrite any monomials in the form
\begin{align}\label{mono}
\prod_{j\in\mathbb{Z}}q_j^{n_j}\bar q_{j}^{n_j'}.
\end{align}
Let
\begin{align*}
n=(n_j,n_j')_{j\in\mathbb{Z}}\in \mathbb{N}^{\mathbb{Z}}\times\mathbb{N}^{\mathbb{Z}}.
\end{align*}
We  define
\begin{align*}
{\rm supp}\  n&=\{j\in\mathbb{Z}: \ n_j\neq 0\ \mbox{or}\ n_j'\neq 0\},\\
\Delta(n)&=\sup_{j,j'\in{\rm supp}\  n }|j-j'|,\\
|n|&=\sum_{j\in\mathbb{Z}}(n_j+n_j').
\end{align*}
If $n_j=n_j'$ for all $j\in\mbox{supp}\ n$, then the monomial \eqref{mono} is called resonant. Otherwise it is called non-resonant. Note that non-resonant monomials contribute to the truncated sum in (\ref{022101}), where resonant ones do not. We define the (resonant) set as
\begin{align}\label{030301}
\mathcal{N}=\left\{n\in\mathbb{N}^{\mathbb{Z}}\times\mathbb{N}^{\mathbb{Z}}:\ n_j=n_j'\ {\rm for}\ \forall\  j\right\}.
\end{align}
Given $j_0$ and $N\in\mathbb{N}$, let
\begin{align*}
A(j_0,N):=\left[j_0-N,j_0+N\right]\cup\left[-j_0-N,-j_0+N\right].
\end{align*}

\begin{defn}\label{122403}
Given a Hamiltonian
\begin{align*}
H(q,\bar{q})=\sum_{n\in\mathbb{N}^{\mathbb{Z}}\times\mathbb{N}^{\mathbb{Z}}}H(n)\prod_{{\rm supp}\ n}q_j^{n_j}\bar {q}_j^{n_j'},
\end{align*}
for $j_0,N\in\mathbb{N}$ and $r>2$, we define
\begin{align}\label{122402}
\left\| H\right\|_{j_0,N,r}=\sum_{n\in\mathbb{N}^{\mathbb{Z}}\times\mathbb{N}^{\mathbb{Z}}\atop
{{\rm supp}}\ n\cap A(j_0,N)\neq \emptyset}{\left|H(n)\right|\cdot|n|\cdot r^{\Delta(n)+|n|-1}}
\end{align}
and
\begin{align}\label{032302}
\left\| H\right\|_{j_0,N,r}^{\mathcal{L}}=\sup_{j\in\mathbb{Z}}\sum_{n\in\mathbb{N}^{\mathbb{Z}}\times\mathbb{N}^{\mathbb{Z}}\atop
{\rm supp}\ n\cap A(j_0,N)\neq \emptyset}{\left|\partial_{v_j}H(n)\right|\cdot|n|\cdot r^{\Delta(n)+|n|-1}},
\end{align}
where $v=(v_j)_{j\in\mathbb{Z}}$ is the potential.
Define
\begin{align*}
\left|\left| \left|H\right|\right|\right|_{j_0,N,r}=\left\| H\right\|_{j_0,N,r}+\left\| H\right\|_{j_0,N,r}^{\mathcal{L}}.
\end{align*}
\end{defn}

\begin{defn}
Given
\begin{align*}
H(q,\bar{q})=\sum_{n\in\mathbb{N}^{\mathbb{Z}}\times\mathbb{N}^{\mathbb{Z}}}H(n)\prod_{{\rm supp}\ n}q_j^{n_j}\bar {q}_j^{n_j'}
\end{align*}
and
\begin{equation*}
G(q,\bar{q})=\sum_{m\in\mathbb{N}^{\mathbb{Z}}\times\mathbb{N}^{\mathbb{Z}}}G(m)\prod_{{\rm supp}\ m}q_j^{m_j}\bar {q}_j^{m_j'},
\end{equation*}
the Poisson bracket of $H$ and $G$ is defined as
\begin{equation*}\label{022201}
\{H,G\}:=\mathbf{i}\sum_{n,m\in\mathbb{N}^{\mathbb{Z}}\times\mathbb{N}^{\mathbb{Z}}}\sum_{k\in\mathbb{Z}}H(n)G(m)(n_km_k'-n_k'm_k)q_k^{n_k+m_k-1}\bar {q}_k^{n_k'+m_k'-1}\left(\prod_{j\neq k}q_j^{n_j+m_j}\bar {q}_j^{n_j'+m_j'}\right).
\end{equation*}
\end{defn}

We have the following key estimate.

\begin{prop}[\textbf{Poisson Bracket}]\label{090603}
For $j_0,N\in\mathbb{N}$, let $a$ and $b$ satisfy $$[a,b]\subset [j_0-N,j_0+N].$$
Let
\begin{align*}
H(q,\bar{q})=\sum_{n\in\mathbb{N}^{\mathbb{Z}}\times\mathbb{N}^{\mathbb{Z}}}H(n)\prod_{{\rm supp}\ n}q_j^{n_j}\bar {q}_j^{n_j'}
\end{align*}
and
 \begin{align*}
G(q,\bar{q})=\sum_{m\in\mathbb{N}^{\mathbb{Z}}\times\mathbb{N}^{\mathbb{Z}}}G(m)\prod_{{\rm supp}\ m}q_j^{m_j}\bar {q}_j^{m_j'}
\end{align*}
with
\begin{align}\label{122401}
{\rm supp}\ n\subset[-b,-a]\cup[a,b]\quad \text{for any}\quad  n.
\end{align}
Then for any $0<\sigma<r/2$, we have
\begin{align}\label{122002}
\left|\left|\left|\left\{ H, G\right\}\right|\right|\right|_{j_0,N,r-\sigma}\leq \frac{1}\sigma\left|\left|\left|H\right|\right|\right|_{j_0,N,r}\cdot|||G|||_{j_0,N,r}.
\end{align}
\end{prop}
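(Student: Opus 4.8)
The plan is to expand $\{H,G\}$ monomial by monomial and to bound the coefficient of each output monomial termwise against $\|H\|_{j_0,N,r}$ and $\|G\|_{j_0,N,r}$. First I would record the combinatorics of a single Poisson-bracket term: a triple $(n,m,k)$ with $n_km_k'-n_k'm_k\ne0$ contributes the monomial with multi-index $\ell=\ell(n,m,k)$ given by $\ell_k=n_k+m_k-1$, $\ell_k'=n_k'+m_k'-1$, and $\ell_j=n_j+m_j$, $\ell_j'=n_j'+m_j'$ for $j\ne k$; in particular $|\ell|=|n|+|m|-2$. Since $n_km_k'-n_k'm_k\ne0$ forces $k\in{\rm supp}\,n\cap{\rm supp}\,m$, and since \eqref{122401} together with $[a,b]\subset[j_0-N,j_0+N]$ gives ${\rm supp}\,n\subset A(j_0,N)$, every surviving term satisfies $k\in A(j_0,N)$, ${\rm supp}\,m\cap A(j_0,N)\ne\emptyset$, and ${\rm supp}\,\ell\subset{\rm supp}\,n\cup{\rm supp}\,m$ with $k$ belonging to both of the latter two supports; a short interval argument then gives $\Delta(\ell)\le\Delta(n)+\Delta(m)$, and hence
\[
\Delta(\ell)+|\ell|-1\le\bigl(\Delta(n)+|n|-1\bigr)+\bigl(\Delta(m)+|m|-1\bigr)-1=:D-1 .
\]
Applying the triangle inequality to $\{H,G\}(\ell)$ and dropping the restriction ${\rm supp}\,\ell\cap A(j_0,N)\ne\emptyset$ in \eqref{122402} (which only enlarges the sum), I am reduced to bounding
\[
\sum_{n,m}\sum_{k}|H(n)|\,|G(m)|\,\bigl|n_km_k'-n_k'm_k\bigr|\,\bigl(|n|+|m|-2\bigr)(r-\sigma)^{\Delta(\ell)+|\ell|-1},
\]
where, by the above, the $n$-sum runs over ${\rm supp}\,n\subset A(j_0,N)$ and the $m$-sum effectively over ${\rm supp}\,m\cap A(j_0,N)\ne\emptyset$.

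Next I would dispose of the polynomial factors using the gap between $r$ and $r-\sigma$. For fixed $n,m$, $\sum_k\bigl|n_km_k'-n_k'm_k\bigr|\le\sum_k(n_km_k'+n_k'm_k)\le|n|\,|m|$ and $|n|+|m|-2\le D$. The elementary inequality $x\,t^{x}\le\bigl(e\ln(1/t)\bigr)^{-1}$, valid for $x\ge0$ and $0<t<1$, applied with $t=(r-\sigma)/r$ and $\ln\frac{r}{r-\sigma}\ge\frac{\sigma}{r}$, gives $x\,(r-\sigma)^{x}\le\frac{r}{e\sigma}\,r^{x}$ for all $x\ge0$; taking $x=D$ and using $(r-\sigma)^{\Delta(\ell)+|\ell|-1}\le(r-\sigma)^{D-1}=(r-\sigma)^{D}/(r-\sigma)$ yields
\[
\bigl(|n|+|m|-2\bigr)(r-\sigma)^{\Delta(\ell)+|\ell|-1}\le D\,(r-\sigma)^{D-1}\le\frac{r}{e\sigma(r-\sigma)}\,r^{D}.
\]
Since $r>2$ and $0<\sigma<r/2$ imply $r-\sigma>r/2>1$, one has $\frac{r}{e(r-\sigma)}<\frac{2}{e}<1$; combining this with $r^{D}=r^{\Delta(n)+|n|-1}r^{\Delta(m)+|m|-1}$ and summing over $n,m$ gives $\|\{H,G\}\|_{j_0,N,r-\sigma}\le\frac{2}{e\sigma}\,\|H\|_{j_0,N,r}\,\|G\|_{j_0,N,r}$.

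Finally I would handle the $\mathcal L$-part. Because the bracket involves only $q,\bar q$, the operator $\partial_{v_j}$ acts only on the coefficients, so $\partial_{v_j}\{H,G\}(\ell)=\mathbf{i}\sum_{\ell(n,m,k)=\ell}\bigl(\partial_{v_j}H(n)\,G(m)+H(n)\,\partial_{v_j}G(m)\bigr)(n_km_k'-n_k'm_k)$; re-running the previous two steps on each of the two sums and then taking $\sup_j$ gives
\[
\|\{H,G\}\|^{\mathcal L}_{j_0,N,r-\sigma}\le\frac{2}{e\sigma}\Bigl(\|H\|^{\mathcal L}_{j_0,N,r}\|G\|_{j_0,N,r}+\|H\|_{j_0,N,r}\|G\|^{\mathcal L}_{j_0,N,r}\Bigr).
\]
Adding this to the previous estimate and using $\bigl(\|H\|+\|H\|^{\mathcal L}\bigr)\bigl(\|G\|+\|G\|^{\mathcal L}\bigr)\ge\|H\|\|G\|+\|H\|^{\mathcal L}\|G\|+\|H\|\|G\|^{\mathcal L}$ together with $\tfrac2e<1$ produces \eqref{122002}. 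I expect the main obstacle to be purely the bookkeeping: confirming that the ``degree'' $\Delta(\cdot)+|\cdot|-1$ is subadditive with a gain of one under the bracket --- which rests on every surviving monomial sharing a common index $k$ in the supports of both factors --- and that the margin $r>2$ is wide enough for the maximization above to swallow the prefactors $|n|+|m|-2$ and $\sum_k|n_km_k'-n_k'm_k|$ while still leaving a constant $\le1$ in front of $1/\sigma$; everything else is the triangle inequality and the definition of $\|\cdot\|_{j_0,N,r}$.
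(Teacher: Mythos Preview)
Your proof is correct and follows essentially the same route as the paper: expand the bracket termwise, use $\Delta(\ell)\le\Delta(n)+\Delta(m)$ and $|\ell|=|n|+|m|-2$, bound $\sum_k|n_km_k'-n_k'm_k|\le|n||m|$, trade $(r-\sigma)$ for $r$ via an elementary inequality to produce the $1/\sigma$, and then apply Leibniz for the $\mathcal L$-part. The only cosmetic difference is that the paper reduces to homogeneous $H,G$ and uses the mean-value bound $p(r-\sigma)^{p-1}\le r^p/\sigma$ on the degree part alone (bounding $(r-\sigma)^{\Delta}\le r^{\Delta}$ separately), whereas you bundle $\Delta+|n|-1$ into a single exponent and use the maximization of $x\mapsto x t^x$; this even yields the slightly sharper constant $2/e<1$ in front of $1/\sigma$.
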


\begin{proof}
First of all, we write
\begin{align*}
\{H,G\}=\sum_{l\in\mathbb{N}^{\mathbb{Z}}\times\mathbb{N}^{\mathbb{Z}}}\{H,G\}
(l)\prod_{{\rm supp}\ l}q_j^{l_j}\bar{q}_{j}^{l_j'},
\end{align*}
where
\begin{align}\label{011401}
\{H,G\}
(l)=\mathbf{i}\sum_{k\in\mathbb{Z}}\left(\sum_{n,m\in\mathbb{N}^{\mathbb{Z}}\times\mathbb{N}^{\mathbb{Z}}}^*H(n)G(m)\left(n_km'_k-n_k'm_k\right) \right)
\end{align}
and the sum $\sum\limits_{n,m\in\mathbb{N}^{\mathbb{Z}}\times\mathbb{N}^{\mathbb{Z}}}^*$  is taken as
\begin{align*}
&l_j=n_j+m_j-1, \quad l'_{j}=n'_j+m'_j-1\ {\rm for}\ j=k,\\
&l_j=n_j+m_j,\quad  l'_{j}=n'_j+m'_j\  {\rm for}\ j\neq k.
\end{align*}

Secondly, let
\begin{align*}
\widetilde G=\sum_{m\in\mathbb{N}^{\mathbb{Z}}\times\mathbb{N}^{\mathbb{Z}}\atop
{\rm supp}\ m\cap A(j_0,N)=\emptyset}G(m)\prod_{{\rm supp}\ m}q_j^{m_j}\bar {q}_j^{m'_j}
\end{align*}
and then following \eqref{122401}, one has $\left\{H,\widetilde G\right\}=0.$
Hence, we always assume that
\begin{align}\label{022401}
G=\sum_{m\in\mathbb{N}^{\mathbb{Z}}\times\mathbb{N}^{\mathbb{Z}}\atop
{\rm supp}\ m\cap A(j_0,N)\neq \emptyset}G(m)\prod_{{\rm supp}\ m}q_j^{m_j}\bar {q}_j^{m_j'}.
\end{align}

Without loss of generality, we assume that $H$ and $G$ are homogeneous polynomials with degrees $n^*$ and $m^*$ respectively, i.e.,
\begin{align*}
H(q,\bar{q})=\sum_{n\in\mathbb{N}^{\mathbb{Z}}\times\mathbb{N}^{\mathbb{Z}}\atop |n|=n^*}H(n)\prod_{{\rm supp}\ n}q_j^{n_j}\bar {q}_j^{n_j'}
\end{align*}
and
\begin{align*}
G=\sum_{m\in\mathbb{N}^{\mathbb{Z}}\times\mathbb{N}^{\mathbb{Z}},|m|=m^*\atop
{\rm supp}\ m\cap A(j_0,N)\neq \emptyset}G(m)\prod_{{\rm supp}\ m}q_j^{m_j}\bar {q}_j^{m_j'}.
\end{align*}

Since $r>2$ and $0<\sigma<r/2$, one has
\begin{align}\label{022202}
1<r-\sigma<r.
\end{align}
In view of \eqref{022202} and
\begin{align*}
\Delta (l)\leq \Delta(n)+\Delta(m),
\end{align*} one has
\begin{align}
&\nonumber\sum_{l\in\mathbb{N}^{\mathbb{Z}}\times\mathbb{N}^{\mathbb{Z}}}
\left| \sum_{k\in\mathbb{Z}} \sum_{n,m\in\mathbb{N}^{\mathbb{Z}}\times\mathbb{N}^{\mathbb{Z}}}^*H(n)G(m)\left(n_km'_k-n_k'm_k\right) \right|\left(r-\sigma\right)^{\Delta(l)} \\
\leq&\nonumber\sum_{n,m\in\mathbb{N}^{\mathbb{Z}}\times\mathbb{N}^{\mathbb{Z}}}
 \left|H(n)\right|\left|G(m)\right|\sum_{k\in\mathbb{Z}}\left(n_km'_k+n_k'm_k\right) \left(r-\sigma\right)^{\Delta(n)+\Delta(m)} \\
\leq&\label{022203}\left(\sum_{n\in\mathbb{N}^{\mathbb{Z}}\times\mathbb{N}^{\mathbb{Z}}}|H(n)|\cdot |n|\cdot r^{\Delta(n)}\right)
\left(\sum_{m\in\mathbb{N}^{\mathbb{Z}}\times\mathbb{N}^{\mathbb{Z}}}|G(m)|\cdot |m|\cdot r^{\Delta(m)}\right).
\end{align}
In view of (\ref{011401}), (\ref{022203}) and using
$|l|=|n|+|m|-2,$ we have
\begin{align*}
&\left\|\left\{H,G\right\}\right\|_{j_0,N,r-\sigma}\\
\leq&\left(|n|+|m|-2\right)(r-\sigma)^{|n|+|m|-3}\\
\ \ &\times\left(\sum_{n\in\mathbb{N}^{\mathbb{Z}}\times\mathbb{N}^{\mathbb{Z}}}|H(n)|\cdot |n|\cdot r^{\Delta(n)}\right)
\left(\sum_{m\in\mathbb{N}^{\mathbb{Z}}\times\mathbb{N}^{\mathbb{Z}}}|G(m)|\cdot |m|\cdot r^{\Delta(m)}\right)\\
\leq&\frac1{\sigma}\left(\sum_{n\in\mathbb{N}^{\mathbb{Z}}\times\mathbb{N}^{\mathbb{Z}}}|H(n)|\cdot |n|\cdot r^{\Delta(n)+|n|-1}\right)
\left(\sum_{m\in\mathbb{N}^{\mathbb{Z}}\times\mathbb{N}^{\mathbb{Z}}}|G(m)|\cdot |m|\cdot r^{\Delta(m)+|m|-1}\right),
\end{align*}
where the last inequality is based on
\begin{align*}
(|n|+|m|-2)(r-\sigma)^{|n|+|m|-3}\leq \frac{1}{\sigma}r^{|n|+|m|-2}.
\end{align*}
Using (\ref{122401}), (\ref{022401}) and Definition \ref{122403}, we have
\begin{align}
\label{042001}\left\|\left\{ H, G\right\}\right\|_{j_0,N,r-\sigma}\leq \frac{1}\sigma\left\|H\right\|_{j_0,N,r}\cdot\left\|G\right\|_{j_0,N,r}.
\end{align}
Finally, recalling
\begin{align*}
\partial_{v_j}\left(H(n)G(m)\right)=\partial_{v_j}H(n)\cdot G(m)+H(n)\cdot \partial_{v_j}G(m)
\end{align*}
and following the proof of (\ref{042001}), one has
\begin{align}\label{042002}
\left\|\left\{ H, G\right\}\right\|_{j_0,N,r-\sigma}^{\mathcal{L}}\leq \frac{1}\sigma\left(\left\|H\right\|_{j_0,N,r}^{\mathcal{L}}\cdot\left\|G\right\|_{j_0,N,r}+\left\|H\right\|_{j_0,N,r}\cdot\left\|G\right\|_{j_0,N,r}^{\mathcal{L}}\right).
\end{align}
Combining (\ref{042001}) and (\ref{042002}), we finish the proof of (\ref{122002}).
\end{proof}

\begin{prop}\label{E1}
{Let $H$ and $G$ be as in } Proposition \ref{090603}.  Assume further that
\begin{align}\label{042801}
\left(\frac{e}{\sigma}\right)|||H|||_{j_0,N,r}\leq \frac12.
\end{align}
Then
\begin{align*}
\left|\left|\left|G\circ X_H^1\right|\right|\right|_{j_0,N,r-\sigma}
\leq2\left|\left|\left|G\right|\right|\right|_{j_0,N,r},
\end{align*}
where $X_H^1$ is the time-$1$ map generated by the flow of $H$.
\end{prop}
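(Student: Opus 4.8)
The plan is to represent $G\circ X_H^1$ by its Lie series in $H$ and then iterate the Poisson bracket estimate of Proposition~\ref{090603}, paying the radius loss $\sigma$ in equal installments. Set
\begin{align*}
G_0:=G,\qquad G_k:=\{G_{k-1},H\}\quad(k\geq1),
\end{align*}
so that, at least formally, $G\circ X_H^1=\sum_{k\geq0}\frac1{k!}G_k$; the series will be shown to converge in $|||\cdot|||_{j_0,N,r-\sigma}$, and that convergence is precisely what legitimizes the identity (alternatively, one invokes the well-posedness of the flow $X_H^t$). Hence it suffices to estimate $|||G_k|||_{j_0,N,r-\sigma}$ for each $k$.

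For the $k$-th term I split the loss $\sigma$ into $k$ equal pieces. Put $r_i:=r-\tfrac{i}{k}\sigma$ for $i=0,\dots,k$, so $r_0=r$, $r_k=r-\sigma$, and every $r_i\in[r-\sigma,r]$. At the $i$-th step apply Proposition~\ref{090603} to the pair $\{G_{i-1},H\}$ with radius $r_{i-1}$ and loss $\tfrac{\sigma}{k}$: this is admissible since the bracket is antisymmetric up to a sign, so $H$ may occupy the first slot where it meets the support hypothesis \eqref{122401}, the value of $|||\{\cdot,\cdot\}|||$ being insensitive to the order of the two arguments; moreover $(i+1)\tfrac{\sigma}{k}\le\sigma+\tfrac{\sigma}{k}\le 2\sigma<r$, which rearranges to $\tfrac{\sigma}{k}<\tfrac{r_{i-1}}{2}$, the smallness requirement of Proposition~\ref{090603}. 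Each such application multiplies the norm by $\tfrac{k}{\sigma}|||H|||_{j_0,N,r_{i-1}}\le\tfrac{k}{\sigma}|||H|||_{j_0,N,r}$, where the last inequality uses monotonicity of $\rho\mapsto|||\cdot|||_{j_0,N,\rho}$ (all exponents $\Delta(n)+|n|-1$ are $\ge0$). Iterating $k$ times,
\begin{align*}
|||G_k|||_{j_0,N,r-\sigma}\le\Big(\tfrac{k}{\sigma}\Big)^k|||H|||_{j_0,N,r}^k\,|||G|||_{j_0,N,r}.
\end{align*}

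Summing, and using $k!\ge(k/e)^k$ so that $\tfrac1{k!}(\tfrac{k}{\sigma})^k\le(\tfrac{e}{\sigma})^k$, together with the hypothesis \eqref{042801} in the form $\tfrac{e}{\sigma}|||H|||_{j_0,N,r}\le\tfrac12$,
\begin{align*}
|||G\circ X_H^1|||_{j_0,N,r-\sigma}\le|||G|||_{j_0,N,r}\sum_{k\ge0}\frac1{k!}\Big(\tfrac{k}{\sigma}|||H|||_{j_0,N,r}\Big)^k\le|||G|||_{j_0,N,r}\sum_{k\ge0}2^{-k}=2|||G|||_{j_0,N,r},
\end{align*}
which also shows the partial sums form a Cauchy sequence in $|||\cdot|||_{j_0,N,r-\sigma}$. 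There is no deep obstacle here: the real content is bookkeeping of the radius loss and making the constants fit, and the loss $k^k/k!\le e^k$ is exactly what forces the threshold $1/2$ in \eqref{042801} and produces the factor $2$. The only points needing genuine care are the verification (done above) that every intermediate radius $r_{i-1}$ remains admissible for Proposition~\ref{090603}, and the rigorous status of the Lie-series identity — best handled by noting that the partial sums converge uniformly for $t\in[0,1]$ and solve the transport equation $\partial_t(G\circ X_H^t)=\{G\circ X_H^t,H\}$, hence sum to $G\circ X_H^1$.
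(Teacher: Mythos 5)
Your proof is correct and follows exactly the paper's own approach: Lie-series expansion of $G\circ X_H^1$, iteration of Proposition~\ref{090603} with the radius loss $\sigma$ split into $k$ equal installments, the Stirling-type bound $k^k/k!\le e^k$, and a geometric series closed by hypothesis~\eqref{042801}. The extra remarks you add (that the support hypothesis is met by putting $H$ in the privileged slot via antisymmetry, and that the partial sums genuinely converge to $G\circ X_H^1$) are sound clarifications, not a different argument.
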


\begin{proof}
 First of all,  we expand $G\circ X_H^1$ into the Taylor series
 \begin{align}\label{3.2}
 G\circ X_H^1=\sum_{n\geq 0}\frac{1}{n!}G^{(n)},
 \end{align}
where $G^{(n)}=\left\{G^{(n-1)},H\right\}$ and $G^{(0)}=G$.
We will estimate $\left|\left|\left|G^{(n)}\right|\right|\right|_{j_0,N,r-\sigma}$ by repeatedly using of Proposition \ref{090603}:
\begin{align}
\nonumber\left|\left|\left|G^{(n)}\right|\right|\right|_{j_0,N,r-\sigma}
\nonumber&=\left|\left|\left|\left\{G^{(n-1)},H\right\}\right|\right|\right|_{j_0,N,r-\sigma}\\
\nonumber&\leq\left(\frac{n}{\sigma}\right)\left(\left|\left|\left|H\right|\right|\right|_{j_0,N,r}\right)\left|\left|\left|G^{(n-1)}\right|\right|\right|_{j_0,N,r-\frac{(n-1)\sigma}{n}}\\
\nonumber&\leq\left(\frac{n}{\sigma}\right)^2\left(\left|\left|\left|H\right|\right|\right|_{j_0,N,r}\right)^2\left|\left|\left|G^{(n-2)}\right|\right|\right|_{j_0,N,r-\frac{(n-2)\sigma}{n}}\\
&\dots\nonumber\\
\nonumber&\leq\left(\frac{n}{\sigma}\right)^n\left(\left|\left|\left|H\right|\right|\right|_{j_0,N,r}\right)^n\left|\left|\left|G\right|\right|\right|_{j_0,N,r}.
\end{align}
Then
\begin{align}\label{122509}
\frac{1}{n!}\left|\left|\left|G^{(n)}\right|\right|\right|_{j_0,N,r-\sigma}\leq \left(\frac{e\left|\left|\left|H\right|\right|\right|_{j_0,N,r}}{\sigma}\right)^n\left|\left|\left|G\right|\right|\right|_{j_0,N,r},
\end{align}
where we use the inequality $n^n<n!e^n$.
Hence combining (\ref{3.2}) and (\ref{122509}), we obtain
\begin{align*}
\left|\left|\left|G\circ X_H^1\right|\right|\right|_{j_0,N,r-\sigma}
&\leq\sum_{n\geq 0}\left(\frac{e\left|\left|\left|H\right|\right|\right|_{j_0,N,r}}{\sigma}\right)^n\left|\left|\left|G\right|\right|\right|_{j_0,N,r}\\
&\leq  2
\left|\left|\left|G\right|\right|\right|_{j_0,N,r},
\end{align*}
where the last inequality is based on (\ref{042801}).
\end{proof}

\begin{rem}\label{092403}
In general, we have
\begin{align}\label{9}
\left|\left|\left|G\circ X_{H}^1-G\right|\right|\right|_{j_0,N,r-\sigma}\leq\frac{e}{\sigma}\cdot
\left|\left|\left|H\right|\right|\right|_{j_0,N,r}\cdot
\left|\left|\left|G\right|\right|\right|_{j_0,N,r},
\end{align}
and
\begin{align}\label{10}
\left|\left|\left|G\circ X_H^1-G-\{G,H\}\right|\right|\right|_{j_0,N,r-\sigma}\leq\left(\frac{e}{\sigma}\right)^2\left|\left|\left|H\right|\right|\right|_{j_0,N,r}^2\cdot
\left|\left|\left|G\right|\right|\right|_{j_0,N,r}.
\end{align}
\end{rem}

Let
\begin{align}\label{031601}
\epsilon=\epsilon_1+\epsilon_2,
\end{align}
and  introduce the non-resonant conditions.

\begin{defn}(\textbf{Non-resonant condition})
Given $\epsilon>0,$ $\alpha\in (0,1/100)$ and $N\in\mathbb{N}$, we say that the frequency $v=(v_j)_{j\in\mathbb{Z}}$ is $(\epsilon,\alpha,N)$-nonresonant if for any $0\neq k\in\mathbb{Z}^{\mathbb{Z}}$,
\begin{align}\label{122106}
\left|\sum_{j\in\mathbb{Z}}k_jv_j\right|\geq\frac{\epsilon^{\alpha}}{N\Delta^2(k)|k|^{\Delta(k)+1}}.
\end{align}
\end{defn}

\section{Analysis and Estimates of the Symplectic Transformations}

We now construct the symplectic transformation $\Gamma$ by a finite step induction.

At the first step, i.e., $s=1$ (in view of (\ref{010703}))
\begin{align}\label{011410}
H_1=H=\frac{1}2\left(\sum_{j\in\mathbb{Z}}v_j|q_j|^2+\epsilon_1\sum_{j\in\mathbb{Z}}\left(\bar q_jq_{j+1}+q_j\bar q_{j+1}\right)+\frac12\epsilon_2\sum_{j\in\mathbb{Z}}|q_j|^4\right),
\end{align}
which can be rewritten as
\begin{align*}
\nonumber H_1&=D_1+Z_1+R_1\\
&=\frac12\sum_{j\in\mathbb{Z}}v_{1j}|q_j|^2
+\sum_{n\in\mathbb{N}^{\mathbb{Z}}\times\mathbb{N}^{\mathbb{Z}}}Z_1(n)\prod_{{\rm supp}\ n}\left|q_j^{n_j}\right|^2
+\sum_{n\in\mathbb{N}^{\mathbb{Z}}\times\mathbb{N}^{\mathbb{Z}}}R_1(n)\prod_{{\rm supp}\ n}q_j^{n_j}\bar q_j^{n_j'},
\end{align*}
where
\begin{align*}
v_{1j}=v_j, \quad  Z_1(n)=\frac{\epsilon_2}4,\quad R_1(n)=\frac{\epsilon_1}2.
\end{align*}
From (\ref{031601}), we see that
\begin{align}\label{122502}
\left\|H_1-D_1\right\|_{j_0,N,r}\leq 10Nr^3\epsilon
\end{align}
and
\begin{align*}
\left\|H_1-D_1\right\|_{j_0,N,r}^{\mathcal{L}}=0,
\end{align*}
which implies
\begin{align*}
\left|\left|\left|H_1-D_1\right|\right|\right|_{j_0,N,r}\leq 10Nr^3\epsilon.
\end{align*}

\subsection{One step of Birkhoff normal form}
Let
\begin{align}\label{011601}
N_s=N-20(s-1),\  s\geq 1.
\end{align}

\begin{lem}\label{122501}
Let $v_1=(v_{1j})_{j\in\mathbb{Z}}$ satisfy the $(\epsilon,\alpha,N)$-nonresonant conditions \eqref{122106}. Assume $0<\sigma<r/2$ and
\begin{align}\label{052601}
\frac{2^6e}{\sigma}\cdot 10N^3r^3\epsilon^{1-2\alpha}\leq \frac12.
\end{align}
Then there exists a change of variables $\Gamma_1:=X_{F_1}^1$ such that
\begin{align*}
\nonumber H_{2}&=H_1\circ X_{F_1}^1=D_2+Z_2+R_2\\
&=\frac12\sum_{j\in\mathbb{Z}}v_{2j}|q_j|^2
+\sum_{n\in\mathbb{N}^{\mathbb{Z}}\times\mathbb{N}^{\mathbb{Z}}\atop n\in\mathcal{N},|n|\geq 4}Z_{2}(n)\prod_{{\rm supp}\ n}\left|q_j^{n_j}\right|^2
 +\sum_{n\in\mathbb{N}^{\mathbb{Z}}\times\mathbb{N}^{\mathbb{Z}}}R_{2}(n)\prod_{{\rm supp}\ n}q_j^{n_j}\bar q_j^{n_j'}.
\end{align*}
Moreover, one has
\begin{align}
\label{122507}\left|\left|\left|F_1\right|\right|\right|_{j_0,N,r}&\leq {2^{6}}\cdot 10N^3r^3\epsilon^{1-2\alpha},\\
\label{122601}\left|\left|\left|Z_2\right|\right|\right|_{j_0,N,r-\sigma}&\leq 10Nr^3\epsilon\left(\sum_{i=0}^{1}2^{-i}\right),\\
\label{122603}\left|\left|\left|R_2\right|\right|\right|_{j_0,N,r-\sigma}&\leq 10Nr^3\epsilon\left(\sum_{i=0}^{1}2^{-i}\right),
\end{align}
and
\begin{align}\label{011403}
\left|\left|\left|\mathcal{R}_2\right|\right|\right|_{j_0,N,r-\sigma}\leq 10Nr^3\epsilon\left( \frac{2^6e}{\sigma}\cdot 10N^3r^3\epsilon^{1-2\alpha}\right),
\end{align}
where
\begin{align}\label{122604}
\mathcal{R}_2=\sum_{n\in\mathbb{N}^{\mathbb{Z}}\times\mathbb{N}^{\mathbb{Z}}}R_{2}(n)\prod_{{\rm supp}\ n\cap A(j_0,N_3)\neq\emptyset}q_j^{n_j}\bar q_j^{n_j'}.
\end{align}
Furthermore, for any $A\geq 3$ the following estimate holds
\begin{align}\label{122605}
\nonumber&\left|\left|\left|\sum_{\Delta(n)+|n|=A}\left(|Z_2(n)|+|R_2(n)|\right)\prod_{{\rm supp}\ n}q_j^{n_j}\bar q_j^{n_j'}\right|\right|\right|_{j_0,N,r-\sigma}\\
\leq& 10Nr^3\epsilon\left( \frac{2^6e}{\sigma}\cdot 10N^3r^3\epsilon^{1-2\alpha}\right)^{A-3}.
\end{align}
\end{lem}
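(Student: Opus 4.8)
The plan is to execute one step of Birkhoff normal form that removes the non-resonant part of $R_1$ sitting strictly inside the barrier, and then to re-sort the transformed Hamiltonian by monomial type.

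\emph{The generating Hamiltonian and its estimate.} Since $D_1=\frac12\sum_j v_{1j}|q_j|^2$ is diagonal in the actions, $\{D_1,\cdot\}$ multiplies a monomial $\prod q_j^{n_j}\bar q_j^{n_j'}$ by a purely imaginary multiple of the small divisor $\sum_j v_{1j}(n_j-n_j')$. Accordingly I would take $F_1$ supported on the hopping monomials $n$ of $R_1$ with ${\rm supp}\ n\subset[j_0-N_2,j_0+N_2]\cup[-j_0-N_2,-j_0+N_2]$, $N_2=N-20$, defined by dividing $R_1(n)$ by that small divisor; writing $\tilde R_1$ for this inner part of $R_1$, one then has $\{D_1,F_1\}=-\tilde R_1$. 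Two facts are built into this choice: ${\rm supp}\ F_1\subset A(j_0,N_2)\subset A(j_0,N)$, so $F_1$ is an admissible compactly supported factor for Proposition \ref{090603}, Proposition \ref{E1} and Remark \ref{092403}; and the piece $R_1-\tilde R_1$ left alone has support disjoint from $A(j_0,N_3)$, which is exactly what the $20$-site shrinkage in \eqref{011601} provides. To bound $F_1$: a hopping monomial has difference vector $k=(n_j-n_j')_j$ with $\Delta(k)=1$, $|k|=2$, so the non-resonant condition \eqref{122106} gives $|\sum_j v_{1j}k_j|\ge\epsilon^{\alpha}/(4N)$; hence $|F_1(n)|\le 4N\epsilon_1\epsilon^{-\alpha}$, and differentiating the divisor once more in $v_j$ costs a further factor $4N\epsilon^{-\alpha}$, so $|\partial_{v_j}F_1(n)|\le 16N^2\epsilon_1\epsilon^{-2\alpha}$. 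Summing the $O(N)$ hopping monomials meeting $A(j_0,N)$, each weighted by $|n|\,r^{\Delta(n)+|n|-1}=2r^2$, and using that only $O(1)$ of them feed $\|F_1\|^{\mathcal L}_{j_0,N,r}$ for a fixed $j$, gives \eqref{122507}. In particular \eqref{052601} is precisely $(e/\sigma)\,|||F_1|||_{j_0,N,r}\le\frac12$, so the time-$1$ map $X_{F_1}^1$ is controlled by Proposition \ref{E1} and Remark \ref{092403}.

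\emph{Transformation and re-collection.} Expand $H_2=H_1\circ X_{F_1}^1=\sum_{n\ge0}\frac1{n!}H_1^{(n)}$ with $H_1=D_1+(Z_1+R_1)$. For the $(Z_1+R_1)$-part, discard the portion with support disjoint from $A(j_0,N)$ (it Poisson-commutes with $F_1$ and contributes $0$ to every $\|\cdot\|_{j_0,N,\cdot}$-norm) and apply Proposition \ref{E1} to the rest; for the $D_1$-part never bound $|||D_1|||$ (it is $O(Nr)$, not small) — instead use $D_1\circ X_{F_1}^1-D_1=\int_0^1\{D_1,F_1\}\circ X_{F_1}^t\,dt=-\int_0^1\tilde R_1\circ X_{F_1}^t\,dt$. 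This yields
$$H_2=D_1+Z_1+(R_1-\tilde R_1)+\mathcal E,\qquad |||\mathcal E|||_{j_0,N,r-\sigma}\le\tfrac{2e}\sigma\,|||F_1|||_{j_0,N,r}\,|||Z_1+R_1|||_{j_0,N,r}$$
by \eqref{9}. Setting $D_2=D_1+(\text{resonant }|n|=2\text{ part of }\mathcal E)$, $Z_2=Z_1+(\text{resonant }|n|\ge4\text{ part})$ and $R_2=(R_1-\tilde R_1)+(\text{non-resonant part of }\mathcal E)$ puts $H_2$ in the stated form. From the explicit expressions for $Z_1,R_1$ (cf. \eqref{122502}) and $r>2$, $|||Z_1|||$ and $|||R_1|||$ are each at most $5Nr^3\epsilon$, while $|||\mathcal E|||\le|||Z_1+R_1|||$ by \eqref{052601}; the triangle inequality then gives \eqref{122601} and \eqref{122603}. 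Finally $R_1-\tilde R_1$ misses $A(j_0,N_3)$, so the truncation $\mathcal R_2$ of \eqref{122604} equals the corresponding restriction of $\mathcal E$; a single Poisson bracket with $F_1$ together with the flow bound of Proposition \ref{E1} then give $|||\mathcal R_2|||_{j_0,N,r-\sigma}\le 10Nr^3\epsilon\big(\tfrac{2^6e}\sigma 10N^3r^3\epsilon^{1-2\alpha}\big)$, i.e. \eqref{011403}.

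\emph{The graded bound and the main obstacle.} For \eqref{122605}, rerun the expansion while tracking the degree $d(n)=\Delta(n)+|n|$. Every monomial of $F_1$ has $d=3$, and because a Poisson bracket obeys $|l|=|n|+|m|-2$ together with $\Delta(l)\le\Delta(n)+\Delta(m)$ (as in the proof of Proposition \ref{090603}), bracketing with $F_1$ raises $d$ by at most $1$; hence a monomial of $Z_2+R_2$ of degree $A$ appears only after at least $A-3$ iterated brackets with $F_1$, each carrying the factor $(e|||F_1|||/\sigma)$ from the recursion in the proof of Proposition \ref{E1}. Combined with $|||Z_1+R_1|||\le10Nr^3\epsilon$ and $(e/\sigma)|||F_1|||\le\tfrac{2^6e}\sigma10N^3r^3\epsilon^{1-2\alpha}$, summing the geometric tail yields \eqref{122605}. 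The step I expect to be the real obstacle is the control of $F_1$: the factors $\Delta^2(k)\,|k|^{\Delta(k)+1}$ in \eqref{122106} are tuned precisely so that the small divisors get absorbed into the $r^{\Delta(n)+|n|-1}$ weights of Definition \ref{122403}, and the jump from $\epsilon^{1-\alpha}$ to $\epsilon^{1-2\alpha}$ — which is what finally limits the time scale in Theorem \ref{main} — is forced by having to propagate the $\partial_{v_j}$-derivative norm $\|\cdot\|^{\mathcal L}_{j_0,N,r}$ through every step; this is exactly where the $\ell^1$-with-exponential-weight norm outperforms the $\ell^\infty$-norm of \cite{WZ09}. The attendant bookkeeping constraint — that $|||D_1|||$ may never be invoked, so each $D_1$ must be eliminated via the closed form $\{D_1,F_1\}=-\tilde R_1$, and $F_1$ must be kept inside the barrier — is what dictates the shrinkage $N_s=N-20(s-1)$.
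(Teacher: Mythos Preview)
Your argument is correct and follows essentially the same route as the paper: solve the homological equation for $F_1$ on the inner barrier, bound it via the non-resonant condition \eqref{122106} (including the extra $\epsilon^{-\alpha}$ from the $\partial_{v_j}$-derivative), then control the error terms by Proposition~\ref{E1}/Remark~\ref{092403} after replacing $\{D_1,F_1\}$ by $-\tilde R_1$ so that $|||D_1|||$ never appears. The only cosmetic differences are that the paper uses the Taylor remainder $(X_{F_1}^1-\mathbf{id}-\{\cdot,F_1\})D_1$ where you use the integral $-\int_0^1\tilde R_1\circ X_{F_1}^t\,dt$, and the paper takes the shortcut $D_2=D_1$, $Z_2=Z_1$ (dumping all of $\mathcal E$ into $R_2$) rather than sorting $\mathcal E$ by resonant type as you do---both give the stated bounds.
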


\begin{proof}
By the Birkhoff normal form theory, one knows that $F_1$ satisfies the homological equation
\begin{align}\label{hom1}
L_{v_1} F_1=\mathcal{R}_1,
\end{align}
where the \textit{Lie derivative} operator is defined by
\begin{align*}
L_{v_1}:\;H\mapsto L_{v_1}H:=\mathbf{i}\sum_{n\in\mathbb{N}^{\mathbb{Z}}\times\mathbb{N}^{\mathbb{Z}}}\left(\sum_{j\in\mathbb{Z}}(n_j-n_j')v_{1j}\right)H(n)\prod_{{\rm supp}\ n}q_j^{n_j}\bar q_j^{n_j'}
\end{align*}
and
\begin{align*}
\mathcal{R}_1=\sum_{n\in\mathbb{N}^{\mathbb{Z}}\times\mathbb{N}^{\mathbb{Z}}}R_1(n)\prod_{{\rm supp}\ n\cap A(j_0,N_2)\neq \emptyset}q_j^{n_j}\bar q_j^{n_j'}.
\end{align*}
Unless $n\in\mathcal{N}$ (see (\ref{030301})), one has
\begin{align*}
F_1(n)=\frac{R_1(n)}{\sum_{j\in\mathbb{Z}}(n_j-n_j')v_{1j}}.
\end{align*}
Note that frequency $v_1$ satisfies the nonresonant conditions (\ref{122106}).  Then we have
\begin{align}\label{122503} |F_1(n)|\leq {|R_1(n)|}\cdot\left({\epsilon^{-\alpha}}\cdot{N}\cdot \Delta^{2}(n)\cdot|n|^{\Delta(n)+1}\right).\end{align}
Noting that $|n|\leq 2\  {\rm and}\  \Delta(n)\leq 1,$
then
\begin{eqnarray}\label{042003}
\left\|F_1\right\|_{j_0,N,r}&\leq& \left\|R_1\right\|_{j_0,N,r}\cdot \epsilon^{-\alpha}\cdot N\cdot 2^2\leq 2^2\cdot 10N^2r^3\epsilon^{1-\alpha},
\end{eqnarray}where the last inequality is based on (\ref{122502}).
On the other hand, for any $\widetilde j\in\mathbb{Z}$ we have
\begin{align*}
\partial_{v_{\widetilde j}}F_1(n)=\frac{\partial_{v_{\widetilde j}}R_1(n)}{\sum_{j\in\mathbb{Z}}(n_j-n_j')v_{1j}}-\frac {R_1(n)} {\left(\sum_{j\in\mathbb{Z}}(n_j-n_j')v_{1j}\right)^2}\cdot\partial_{v_{\widetilde j}}\left(\sum_{j\in\mathbb{Z}}(n_j-n_j')v_{1j}\right).
\end{align*}
Then following the proof of (\ref{042003}), one has
\begin{align}\label{042004}
\left\|F_1\right\|^{\mathcal{L}}_{j_0,N,r}\leq 2^5\cdot 10N^3r^3\epsilon^{1-2\alpha}.
\end{align}
Then we finish the proof of (\ref{122507}) by using (\ref{042003}) and (\ref{042004}).

Using Taylor's formula yields
\begin{align*}
\nonumber H_{2}&:=H_1\circ X_{F_1}^1=D_1+Z_1\\
\nonumber &\ +\{D_1,F_1\}+R_1+\left(X_{F_1}^1-\textbf{id}-\{\cdot,F_1\}\right)D_1
+\left(X_{F_1}^1-\textbf{id}\right)(Z_1+R_1)\\
\nonumber&=D_2+Z_2+R_2\\
&:=\frac12\sum_{j\in\mathbb{Z}}v_{2j}|q_j|^2
+\sum_{n\in\mathbb{N}^{\mathbb{Z}}\times\mathbb{N}^{\mathbb{Z}}\atop n\in\mathcal{N},|n|\geq 4}Z_{2}(n)\prod_{{\rm supp}\ n}\left|q_j^{n_j}\right|^2
+\sum_{n\in\mathbb{N}^{\mathbb{Z}}\times\mathbb{N}^{\mathbb{Z}}}R_{2}(n)\prod_{{\rm supp}\ n}q_j^{n_j}\bar q_j^{n_j'},
\end{align*}
where by \eqref{hom1},
\begin{align}
\nonumber R_2 &=(R_1-\mathcal{R}_1)+\left(X_{F_1}^1-\textbf{id}-\{\cdot,F_1\}\right)D_1+\left(X_{F_1}^1-\textbf{id}\right)(Z_1+R_1)\\
\nonumber &=\sum_{n\in\mathbb{N}^{\mathbb{Z}}\times\mathbb{N}^{\mathbb{Z}}}R_{2}(n)\prod_{{\rm supp}\ n}q_j^{n_j}\bar q_j^{n_j'},
\end{align}
and
\begin{align*}
&\left(X_{F_1}^1-\textbf{id}-\{\cdot,F_1\}\right)D_1:=D_1\circ X_{F_1}^1-D_1-\{D_1,F_1\},\\
&\left(X_{F_1}^1-\textbf{id}\right)(Z_1+R_1):=(Z_1+R_1)\circ X_{F_1}^1-(Z_1+R_1).
\end{align*}
In the first step, we have $v_{2}=v_{1}$ and $Z_2=Z_1,$ which implies $D_2=D_1$ and $Z_2=Z_1$. Hence, the estimate (\ref{122601}) holds true.

Write
\begin{align*}
R_2=\mathcal{R}_2+(R_2-\mathcal{R}_2),
\end{align*}
where $\mathcal{R}_2$ is defined by (\ref{122604}).
By (\ref{9}) and (\ref{10}) in Remark \ref{092403} and  \eqref{hom1}, for any $0<\sigma<r/2$ one has
\begin{align*}
\left|\left|\left|\mathcal{R}_2\right|\right|\right|_{j_0,N,r-\sigma}&\leq\nonumber \left(\frac{e}{\sigma}\right)\cdot\left|\left|\left|F_1\right|\right|\right|_{j_0,N,r}\cdot\left|\left|\left|H_1\right|\right|\right|_{j_0,N,r}\\
&\leq10Nr^3\epsilon\left(\frac{2^6e}{\sigma}\cdot 10N^3r^3\epsilon^{1-2\alpha}\right),
\end{align*}
where the last inequality follows from (\ref{122502}) and (\ref{122507}). This finishes the proof of (\ref{011403}).
Similarly, we have
\begin{align*}
\left|\left|\left|R_2-\mathcal{R}_2\right|\right|\right|_{j_0,N,r-\sigma}&\leq 10Nr^3\epsilon+10Nr^3\epsilon\left(\frac{2^6e}{\sigma}\cdot 10N^3r^3\epsilon^{1-2\alpha}\right)\\
&\leq 10Nr^3\epsilon\left(\sum_{i=0}^12^{-i}\right),
\end{align*}
where the last inequality is based on  (\ref{052601}).

Finally, the estimate  (\ref{122605}) follows from (\ref{122502}) and (\ref{122507}) by using by induction about $A$. Precisely,
the term in $R_2$ comes from $\frac1{j!}Z_1^{(j)}$ and $\frac1{ {j}!}R_1^{({j})}$ for some $j\in \mathbb{N}$, where
$Z_1^{(j)}=\left\{Z_1^{(j-1)},H\right\}$, $Z_1^{(0)}=Z_1$, $R_1^{(j)}=\left\{R_1^{(j-1)},H\right\}$ and  $R_1^{(0)}=R_1$. Following the proof of (\ref{122509}) and noting that
$\Delta(l)\leq \Delta(n)+\Delta(m)$ and $|l|\leq |n|+|m|-2,$ we finish the proof of (\ref{122605}).
\end{proof}

\subsection{Iterative Lemma}
\begin{lem}\label{62}
For $s\in\mathbb{N}$ and $1\leq s\leq \sqrt{N}-1$, consider the Hamiltonian $H_s(q,\bar q)$ of the form
\begin{align*}
\nonumber H_s&=D_s+Z_s+R_s\\
&=\frac12\sum_{j\in\mathbb{Z}}v_{sj}|q_j|^2+\sum_{n\in\mathbb{N}^{\mathbb{Z}}\times\mathbb{N}^{\mathbb{Z}}\atop n\in\mathcal{N},|n|\geq4}Z_s(n)\prod_{{\rm supp}\ n}\left|q_j^{n_j}\right|^2
+\sum_{n\in\mathbb{N}^{\mathbb{Z}}\times\mathbb{N}^{\mathbb{Z}}}R_s(n)\prod_{{\rm supp}\ n}q_j^{n_j}\bar q_j^{n_j'}.
\end{align*}
Let $v_s=(v_{sj})_{j\in\mathbb{Z}}$ satisfy the $(\epsilon,\alpha,N)$-nonresonant condition \eqref{122106}. Assume that $0<\sigma<r/2$ and
\begin{align}
&\label{011412}\frac{\left(10(s+1)\right)^{10(s+1)}\cdot 2^6e}{\sigma}\cdot N^{3(s+1)}r^3\epsilon^{1-2\alpha}\leq \frac12,\\
&\label{122601..}\left|\left|\left|Z_s\right|\right|\right|_{j_0,N,r-(s-1)\sigma}\leq 10Nr^3\epsilon\left(\sum_{i=0}^{s-1}2^{-i}\right),\\
&\label{122603..}\left|\left|\left|R_s\right|\right|\right|_{j_0,N,r-(s-1)\sigma}\leq 10Nr^3\epsilon\left(\sum_{i=0}^{s-1}2^{-i}\right),\\
&\label{122602..}\left|\left|\left|\mathcal{R}_s\right|\right|\right|_{j_0,N,r-(s-1)\sigma}\leq 10Nr^3\epsilon\left( \frac{(10s)^{10s}\cdot 2^6e}{\sigma}\cdot N^{3s}r^3\epsilon^{1-2\alpha}\right)^{s-1},
\end{align}
where
\begin{align}\label{122604.}
\mathcal{R}_s=\sum_{n\in\mathbb{N}^{\mathbb{Z}}\times\mathbb{N}^{\mathbb{Z}}}R_{s}(n)\prod_{{\rm supp}\ n\cap A(j_0,N_{s+1})\neq\emptyset}q_j^{n_j}\bar q_j^{n_j'}.
\end{align}
Furthermore, assume for any $A\geq 3$ the following holds
\begin{align}\label{122605.}
\nonumber&\left|\left|\left|\sum_{\Delta(n)+|n|=A}\left(|Z_s(n)|+|R_s(n)|\right)\prod_{{\rm supp}\ n}q_j^{n_j}\bar q_j^{n_j'}\right|\right|\right|_{j_0,N,r-(s-1)\sigma}\\
\leq& 10Nr^3\epsilon\left( \frac{(10s)^{10s}\cdot 2^6e}{\sigma}\cdot N^{3s}r^3\epsilon^{1-2\alpha}\right)^{A-3}.
\end{align}

Then there exists a change of variables $\Phi_s:=X_{F_s}^1$
\begin{align*}
\nonumber H_{s+1}&=H_s\circ X_{F_s}^1\\
&=\frac12\sum_{j\in\mathbb{Z}}v_{(s+1)j}|q_j|^2
+\sum_{n\in\mathbb{N}^{\mathbb{Z}}\times\mathbb{N}^{\mathbb{Z}}\atop n\in\mathcal{N},|n|\geq 4}Z_{s+1}(n)\prod_{{\rm supp}\ n}\left|q_j^{n_j}\right|^2\\
&\ +\sum_{n\in\mathbb{N}^{\mathbb{Z}}\times\mathbb{N}^{\mathbb{Z}}}R_{s+1}(n)\prod_{{\rm supp}\ n}q_j^{n_j}\bar q_j^{n_j'}.
\end{align*}
Moreover, one has
\begin{align}
&\label{122610} \left|\left|\left|F_s\right|\right|\right|_{j_0,N,r-(s-1)\sigma}\leq \left(\frac{\sigma}{e}\right)\left( \frac{(10s)^{10s}\cdot 2^6e}{\sigma}\cdot N^{3s}r^3\epsilon^{1-2\alpha}\right)^{s},\\
&\label{122616}\left|\left|\left|Z_{s+1}\right|\right|\right|_{j_0,N,r-s\sigma}\leq 10Nr^3\epsilon\left(\sum_{i=0}^{s}2^{-i}\right),\\
&\label{122615}\left|\left|\left|R_{s+1}\right|\right|\right|_{j_0,N,r-s\sigma}\leq 10Nr^3\epsilon\left(\sum_{i=0}^{s}2^{-i}\right),\\
&\label{122611}\left|\left|\left|\mathcal{R}_{s+1}\right|\right|\right|_{j_0,N,r-s\sigma}\leq10Nr^3\epsilon\cdot \left(\frac{(10(s+1))^{10(s+1)}\cdot 2^6e}{\sigma}\cdot N^{3(s+1)}r^3\epsilon^{1-2\alpha}\right)^{s},
\end{align}
where
\begin{align*}
\mathcal{R}_{s+1}=\sum_{n\in\mathbb{N}^{\mathbb{Z}}\times\mathbb{N}^{\mathbb{Z}}}R_{s+1}(n)\prod_{{\rm supp}\ n\cap A(j_0,N_{s+2})\neq \emptyset}q_j^{n_j}\bar q_j^{n_j'}.
\end{align*}
Moreover, we have
\begin{align}
\nonumber&\left|\left|\left|\sum_{\Delta(n)+|n|=A}\left(|Z_{s+1}(n)|+|R_{s+1}(n)|\right)\prod_{{\rm supp}\ n}q_j^{n_j}\bar q_j^{n_j'}\right|\right|\right|_{j_0,N,r-s\sigma}\\
\label{122617}\leq& 10Nr^3\epsilon\left( \frac{(10(s+1))^{10(s+1)}\cdot 2^6e}{\sigma}\cdot N^{3(s+1)}r^3\epsilon^{1-2\alpha}\right)^{A-3}.
\end{align}

\end{lem}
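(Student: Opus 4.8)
The plan is to run one step of the Birkhoff normal form exactly as in Lemma \ref{122501}, but now starting from the inductive hypotheses on $H_s$ rather than from the explicit $H_1$. First I would solve the homological equation $L_{v_s}F_s=\mathcal R_s$, defining $F_s(n)=R_s(n)/\sum_j(n_j-n_j')v_{sj}$ for $n\notin\mathcal N$ with ${\rm supp}\ n\cap A(j_0,N_{s+1})\neq\emptyset$ and $F_s(n)=0$ otherwise. Using the $(\epsilon,\alpha,N)$-nonresonant condition \eqref{122106} one gets the pointwise bound $|F_s(n)|\leq|R_s(n)|\cdot\epsilon^{-\alpha}N\Delta^2(n)|n|^{\Delta(n)+1}$, and for the $\mathcal L$-part an extra differentiation of the small divisor costs a second factor of $\epsilon^{-\alpha}N\Delta^2(n)|n|^{\Delta(n)+1}$. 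The key point is that $\mathcal R_s$ only contains monomials with $\Delta(n)+|n|\geq 3$ on which the bound \eqref{122605.} applies term by term; summing the geometric-type series in $A$ and absorbing the polynomial factors $\Delta^2(n)|n|^{\Delta(n)+1}$ into the weight $r^{\Delta(n)+|n|-1}$ (which is why $r>2$ and the slack $\sigma$ between $r-(s-1)\sigma$ and $r-s\sigma$ are needed) yields \eqref{122610}. The bookkeeping here is exactly the combinatorial heart of the argument: one must check that replacing $s$ by $s+1$ only inflates the constant $(10s)^{10s}\to(10(s+1))^{10(s+1)}$ and the power $N^{3s}\to N^{3(s+1)}$, which is where the restriction $1\leq s\leq\sqrt N-1$ together with \eqref{011412} is used to keep the smallness $(e/\sigma)|||F_s|||\leq 1/2$ so that Proposition \ref{E1} and Remark \ref{092403} apply.

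Next I would define $H_{s+1}=H_s\circ X_{F_s}^1$ and Taylor expand,
\begin{align*}
H_{s+1}=D_s+Z_s+\{D_s,F_s\}+R_s+\left(X_{F_s}^1-\mathbf{id}-\{\cdot,F_s\}\right)D_s+\left(X_{F_s}^1-\mathbf{id}\right)(Z_s+R_s),
\end{align*}
and collect terms. Using the homological equation, $\{D_s,F_s\}+R_s=R_s-\mathcal R_s+(\text{higher order})$, so the new resonant part $Z_{s+1}$ receives only the resonant contributions of $(X_{F_s}^1-\mathbf{id})(Z_s+R_s)$ together with $Z_s$ (so $D_{s+1}=D_s$, $v_{(s+1)}=v_s$ at this step just as in Lemma \ref{122501}), and $R_{s+1}$ absorbs everything else. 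By Remark \ref{092403}, each of these correction terms is bounded by $(e/\sigma)|||F_s|||\cdot|||H_s|||$ or $(e/\sigma)^2|||F_s|||^2|||D_s|||$; plugging in \eqref{122610}, \eqref{122601..}, \eqref{122603..} and the trivial bound on $|||D_s|||$ gives a quantity of size $10Nr^3\epsilon\cdot(\text{the small factor in }\eqref{011412})$, which is $\leq 10Nr^3\epsilon\cdot 2^{-s}$ by \eqref{011412}. Adding this to the inductive bounds \eqref{122601..}, \eqref{122603..} produces the geometric sums $\sum_{i=0}^{s}2^{-i}$ in \eqref{122616} and \eqref{122615}.

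For \eqref{122611} I would argue as for \eqref{011403} in Lemma \ref{122501}: $\mathcal R_{s+1}$ is the part of $R_{s+1}$ supported near $A(j_0,N_{s+2})$, and since $R_{s+1}-R_s\mathbf{1}_{\{\cdot\}}$ is built from at least one Poisson bracket with $F_s$, it carries at least one extra factor $(e/\sigma)|||F_s|||$; more precisely the $\mathcal R_s$-piece of $R_s$ is cancelled by the homological equation, so what remains in $\mathcal R_{s+1}$ is at least one bracket applied to something already bounded by the $(s-1)$st power in \eqref{122602..}, giving the $s$th power in \eqref{122611} after inserting \eqref{122610}. Finally \eqref{122617} is the degree-graded refinement: one tracks, as in the last paragraph of the proof of Lemma \ref{122501}, that a monomial of degree $\Delta(n)+|n|=A$ in $G^{(j)}=\{G^{(j-1)},F_s\}$ can only arise from combining pieces whose degrees add up (using $\Delta(l)\leq\Delta(n)+\Delta(m)$ and $|l|\leq|n|+|m|-2$), so the graded bound \eqref{122605.} with exponent $A-3$ is reproduced with $s\to s+1$ after summing over $j$ and using $n^n<n!e^n$ as in \eqref{122509}. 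The main obstacle is purely bookkeeping: verifying that the constants $(10s)^{10s}$ and powers $N^{3s}$ propagate correctly under one more bracket, i.e.\ that the loss from each application of Proposition \ref{090603} (a factor $1/\sigma$ and a combinatorial factor) is dominated by the gap between $(10s)^{10s}N^{3s}$ and $(10(s+1))^{10(s+1)}N^{3(s+1)}$ for all $s\leq\sqrt N-1$; this is exactly the reason the iteration is cut off at $\sqrt N$ steps and why \eqref{011412} is stated with $s+1$ in the exponents.
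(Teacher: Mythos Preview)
There is a genuine gap in your construction of $F_s$. You solve the homological equation for \emph{all} of $\mathcal R_s$, whereas the paper solves it only for the truncated piece
\[
\widetilde{\mathcal R}_s=\sum_{\substack{{\rm supp}\,n\cap A(j_0,N_{s+1})\neq\emptyset\\ \Delta(n)+|n|\leq s+2}}R_s(n)\prod q_j^{n_j}\bar q_j^{n_j'}.
\]
The additional cutoff $\Delta(n)+|n|\leq s+2$ is not cosmetic: it is what makes the bound \eqref{122610} hold. The small-divisor loss from \eqref{122106} is $\epsilon^{-\alpha}N\,\Delta^2(n)\,|n|^{\Delta(n)+1}$, and the last factor is \emph{not} polynomial in $A=\Delta(n)+|n|$ but rather of order $A^{A}$. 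You cannot absorb $A^{A}$ into the weight $r^{A-1}$, with or without the slack $\sigma$: shrinking the radius by $\sigma$ gains at most a factor $(r/(r-\sigma))^{A}$, which is geometric in $A$ and is crushed by $A^{A}$. Concretely, if you try to sum using \eqref{122605.} you are led to $\sum_{A\geq 3}A^{A+3}\eta_s^{A-3}$ with $\eta_s\leq 1/2$, and this series diverges. The paper avoids this by only eliminating monomials with $\Delta(n)+|n|\leq s+2$, so that the divisor loss is at most $(4s)^{4s}$ (respectively $(4s)^{8s}$ for the $\mathcal L$-part), and this is what gets packaged into the constant $(10s)^{10s}$. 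The high-degree monomials $\Delta(n)+|n|>s+2$ left over in $\mathcal R_s-\widetilde{\mathcal R}_s$ are not killed at step $s$; they survive into $\mathcal R_{s+1}$, but \eqref{122605.} already says their total norm is $O(\eta_s^{s})$, which is exactly the size required by \eqref{122611}.

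A second, smaller point: it is not true in general that $D_{s+1}=D_s$ and $v_{(s+1)}=v_s$. The paper puts the degree-$2$ resonant part of the correction $G_{s+1}$ into $D_{s+1}$ (equation \eqref{031104}), so the frequencies are genuinely modulated along the iteration. This is precisely why Lemma \ref{lem101} and the measure estimate in \S4 are needed: one must remove further $v_1$'s so that each modulated $v_s$ remains $(\epsilon,\alpha,N)$-nonresonant. The equality $v_2=v_1$ in Lemma \ref{122501} is special to the first step and does not propagate.
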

\begin{proof}
As done before, we know that $F_s$ will satisfy the homological equation
\begin{align*}
L_{v_s} F_s=\widetilde{\mathcal R}_s,
\end{align*}
where
\begin{align}\label{022501}
\widetilde{\mathcal{R}}_s(q,\bar q):=\sum_{n\in\mathbb{N}^{\mathbb{Z}}\times\mathbb{N}^{\mathbb{Z}}}R_{s}(n)\prod_{{\rm supp}\ n\cap A(j_0,N_{s+1})\neq \emptyset\atop
\Delta (n)+|n|\leq s+2}q_j^{n_j}\bar q_j^{n_j'}.
\end{align}
By the direct computations, one has
\begin{align*}
F_s(n)=\frac{R_s(n)}{\sum_{j\in\mathbb{Z}}(n_j-n_j')v_{sj}},
\end{align*}
unless $n\in\mathcal{N}$.
Since the frequency $v_s$ satisfies the $(\epsilon,\alpha,N)$-nonresonant condition (\ref{122106}), we get
\begin{align}\label{111901..}
|F_s(n)|\leq {|R_{s}(n)|}\cdot\epsilon^{-\alpha}\cdot{N}\cdot \Delta^2(n)\cdot|n|^{\Delta(n)+1}.\end{align}
In view of (\ref{122602..})
we have
\begin{align}
\nonumber&\left\|F_s\right\|_{j_0,N,r-(s-1)\sigma}\\
\nonumber&\leq10Nr^3\epsilon\left( \frac{(10s)^{10s}\cdot 2^6e}{\sigma}\cdot N^{3s}r^3\epsilon^{1-2\alpha}\right)^{s-1}\cdot\epsilon^{-\alpha}\cdot{N}\cdot \Delta^2(n)\cdot|n|^{\Delta(n)+1}\\
\label{042005}&\leq\left(10N^2r^3\epsilon^{1-\alpha}\left(4s\right)^{4s}\right)\left( \frac{(10s)^{10s}\cdot 2^6e}{\sigma}\cdot N^{3s}r^3\epsilon^{1-2\alpha}\right)^{s-1},
\end{align}
where the last inequality is based on $\Delta(n)+|n|\leq s+2.$
Similarly, one has
\begin{align}\label{042006}
\left\|F_s\right\|_{j_0,N,r-(s-1)\sigma}^{\mathcal{L}}\leq \left(10N^3r^3\epsilon^{1-2\alpha}\left(4s\right)^{8s}\right)\left( \frac{(10s)^{10s}\cdot 2^6e}{\sigma}\cdot N^{3s}r^3\epsilon^{1-2\alpha}\right)^{s-1}.
\end{align}
In view of (\ref{042005}) and (\ref{042006}), one has
 \begin{align*}
 \left|\left|\left|F_s\right|\right|\right|_{j_0,N,r-(s-1)\sigma}&\leq \left(20N^3r^3\epsilon^{1-2\alpha}\left(4s\right)^{8s}\right)\left( \frac{(10s)^{10s}\cdot 2^6e}{\sigma}\cdot N^{3s}r^3\epsilon^{1-2\alpha}\right)^{s-1}\\
 &\leq\left(\frac{\sigma}{e}\right)\left( \frac{(10s)^{10s}\cdot 2^6e}{\sigma}\cdot N^{3s}r^3\epsilon^{1-2\alpha}\right)^{s},
 \end{align*}
 which finishes the proof of (\ref{122610}).

Using Taylor's formula again shows
\begin{align*}
H_{s+1}&:=H_s\circ X_{F_s}^1\\
&=D_s+\{D_s,F_s\}+Z_s+R_s+\left(X_{F_s}^1-\textbf{id}-\{\cdot,F_s\}\right)D_s
+\left(X_{F_s}^1-\textbf{id}\right)(Z_s+R_s)\\
&=D_{s+1}+Z_{s+1}+R_{s+1}\\
&=\frac12\sum_{j\in\mathbb{Z}}v_{(s+1)j}|q_j|^2+\sum_{n\in\mathbb{N}^{\mathbb{Z}}\times\mathbb{N}^{\mathbb{Z}}\atop
n\in\mathcal{N},|n|\geq 4}Z_{s+1}(n)\prod_{{\rm supp}\ n}\left|q_j^{n_j}\right|^2\\
&\ +\sum_{n\in\mathbb{N}^{\mathbb{Z}}\times\mathbb{N}^{\mathbb{Z}}}R_{s+1}(n)\prod_{{\rm supp}\ n}q_j^{n_j}\bar q_j^{n_j'}.
\end{align*}
Precisely, let
\begin{align*}
G_{s+1}&=\{D_s,F_s\}+R_s+\left(X_{F_s}^1-\textbf{id}-\{\cdot,F_s\}\right)D_s
+\left(X_{F_s}^1-\textbf{id}\right)(Z_s+R_s)\\
&=\sum_{n\in\mathbb{N}^{\mathbb{Z}}\times\mathbb{N}^{\mathbb{Z}}}G_{s+1}(n)\prod_{{\rm supp}\ n}q_j^{n_j}\bar q_j^{n_j'},
\end{align*}
and then one has
\begin{align}
\label{031104}& D_{s+1}=D_s+\sum_{n\in\mathbb{N}^{\mathbb{Z}}\times\mathbb{N}^{\mathbb{Z}}\atop
n\in\mathcal{N},|n|=2}G_{s+1}(n)\prod_{{\rm supp}\ n}q_j^{n_j}\bar q_j^{n_j'},\\
\nonumber &Z_{s+1}=Z_s+\sum_{n\in\mathbb{N}^{\mathbb{Z}}\times\mathbb{N}^{\mathbb{Z}}\atop
n\in\mathcal{N},|n|\geq 4}G_{s+1}(n)\prod_{{\rm supp}\ n}q_j^{n_j}\bar q_j^{n_j'},\\
\nonumber &R_{s+1}=\sum_{n\in\mathbb{N}^{\mathbb{Z}}\times\mathbb{N}^{\mathbb{Z}}\atop
n\notin\mathcal{N}}G_{s+1}(n)\prod_{{\rm supp}\ n}q_j^{n_j}\bar q_j^{n_j'}.
\end{align}
Write
\begin{align*}
R_{s+1}=\mathcal{R}_{s+1}+(R_{s+1}-\mathcal{R}_{s+1}),
\end{align*}
where
\begin{align*}
{\mathcal{R}}_{s+1}=\sum_{n\in\mathbb{N}^{\mathbb{Z}}\times\mathbb{N}^{\mathbb{Z}}}R_{s+1}(n)\prod_{{\rm supp}\ n\cap A(j_0,N_{s+2})\neq\emptyset}q_j^{n_j}\bar q_j^{n_j'}.
\end{align*}

By (\ref{9}) and (\ref{10}) in Remark \ref{092403} and (\ref{122610}), one has
\begin{align*}
\left|\left|\left|\mathcal{R}_{s+1}\right|\right|\right|_{j_0,N,r-s\sigma}&\leq\left(\frac{e}{\sigma}\right)\left|\left|\left|F_s\right|\right|\right|_{j_0,N,r-(s-1)\sigma}
\left|\left|\left|Z_s+R_s\right|\right|\right|_{j_0,N,r-(s-1)\sigma}\\
&\leq\left(\frac{(10s)^{10s}\cdot 2^6e}{\sigma}\cdot N^{3s}r^3\epsilon^{1-2\alpha}\right)^{s}\cdot10Nr^3\epsilon\left(\sum_{i=0}^{s-1}2^{-i}\right)\\
&\leq10Nr^3\epsilon\cdot \left(\frac{(10(s+1))^{10(s+1)}\cdot 2^{6}e}{\sigma}\cdot N^{3(s+1)}r^3\epsilon^{1-2\alpha}\right)^{s},
\end{align*}
which finishes the proof of (\ref{122611}).
 Similarly, we have
\begin{equation*}
\left|\left|\left|R_{s+1}-\mathcal{R}_{s+1}\right|\right|\right|_{j_0,N,r-s\sigma}\leq 10Nr^3\epsilon\left(\sum_{i=0}^{s}2^{-i}\right),
\end{equation*}
where the last inequality is based on  (\ref{011412}). This finishes the proof of (\ref{122615}).
Similarly,  one has
\begin{align*}
\left|\left|\left|Z_{s+1}\right|\right|\right|_{j_0,N,r-s\sigma}\leq 10Nr^3\epsilon\left(\sum_{i=0}^{s}2^{-i}\right),
\end{align*}
which finishes the proof of (\ref{122616}).

Finally, the estimate (\ref{122617}) follows from the proof of (\ref{122605}).


\end{proof}
\subsection{The Birkhoff Normal Form Theorem}
In this subsection, we will establish the Birkhoff normal form theorem. Fix
\begin{align}\label{051802}
N=\left|\frac{\ln \epsilon}{200\ln\left|\ln\epsilon\right|}\right|^2.
\end{align}
We begin with a key lemma  in dealing with the nonresonant condition (\ref{122106}). Denote by ${\rm mes}(\cdot)$ the standard product measure on $[0,1]^{\Z}$.
{\begin{lem}\label{lem101}
Fix $\alpha\in(0,1/100)$ and $j_0\in\N$. Then for $0<\epsilon<\epsilon(\alpha)\ll1$,  there exists some  $\mathfrak{R}(j_0)\subset [0,1]^{\Z}$  satisfying
\begin{align*}
{\rm mes}(\mathfrak{R}(j_0))\leq \epsilon^{\alpha/2}
\end{align*}
such that the following holds:  if $v_1=(v_{1j})_{j\in\Z}\in[0,1]^{\Z}\setminus\mathfrak{R}(j_0)$, then all $v_s=(v_{sj})_{j\in\Z}$  with $1\leq s\leq M<\sqrt{N}-1$ will satisfy the nonresonant condition (\ref{122106}), where $v_s$ ($2\leq s\leq M$) are inductively defined in the Iterative Lemma (i.e., Lemma \ref{62}).
\end{lem}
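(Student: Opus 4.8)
The strategy is a classical small-divisor measure estimate; the only wrinkle is that for $s\ge2$ the frequency $v_s=(v_{sj})_{j\in\Z}$ is not a vector of independent random variables but a function of $v_1$ produced by the construction of Lemma~\ref{62}. I would proceed in three steps. \textbf{Step 1 (regularity and Lipschitz-smallness of $v_s$ in $v_1$).} Fix $M<\sqrt N-1$. Arguing by induction on $s$, on the open set $\mathcal U_{s-1}\subset[0,1]^{\Z}$ of potentials for which $v_1,\dots,v_{s-1}$ satisfy \eqref{122106}, Lemma~\ref{62} produces $v_s$ as a real-analytic function of $v_1$: each step divides $R_{s'}(n)$ by $\sum_{j}(n_j-n_j')v_{s'j}$, which stays bounded away from $0$ on $\mathcal U_{s-1}$, and then composes with the time-$1$ flow $X_{F_{s'}}^1$, so all coefficients are analytic in $v_1$ wherever the divisors do not vanish. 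Since $v_{sj}$ equals twice the coefficient of $|q_j|^2$ in $H_s$ and $v_{sj}-v_{1j}$ accumulates only through the increments $D_{s'+1}-D_{s'}$ of \eqref{031104} — and the term $R_{s'}$ inside $G_{s'+1}$ is non-resonant, hence does not contribute to these increments, which are therefore of size $O(|||F_{s'}|||)$ by Remark~\ref{092403} — the bounds of Lemma~\ref{62}, Propositions \ref{090603}--\ref{E1} and the $\left\|\cdot\right\|_{j_0,N,r}^{\mathcal L}$-component of Definition~\ref{122403} yield
\begin{align*}
\kappa:=\sup_{1\le s\le M}\ \sup_{j^*\in\Z}\ \sum_{j\in\Z}\bigl|\partial_{v_{1j^*}}(v_{sj}-v_{1j})\bigr|\ \le\ \epsilon^{1/2}\qquad(\epsilon<\epsilon(\alpha)),
\end{align*}
using that, with $N$ as in \eqref{051802}, the prefactor $(10s)^{10s}N^{3s}$ in \eqref{122610} is only a small fixed negative power of $\epsilon$ for $s\le M$ and so is absorbed by $\epsilon^{1-2\alpha}$ since $\alpha<1/100$. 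The same localization shows that the indices $n$ entering the homological equation \eqref{022501} at step $s\le M$ satisfy $\mathrm{supp}\,n\subset A(j_0,N+2)$ and $\Delta(n)+|n|\le s+2$; since every monomial of the construction is gauge invariant (so $\sum_j(n_j-n_j')=0$, forcing $\Delta\ge1$ for any nonzero $k=n-n'$), the only resonance vectors $k=(k_j)_{j\in\Z}\ne0$ that must obey \eqref{122106} for the construction to go through are those with $\mathrm{supp}\,k\subset A(j_0,N+2)$, $\Delta(k)\ge1$ and $\Delta(k)+|k|\le M+2$.

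\textbf{Step 2 (one sub-level set, and summation).} Fix such a $k$, pick $j^*\in\mathrm{supp}\,k$ (so $|k_{j^*}|\ge1$) and $1\le s\le M$, and set $\phi_{k,s}(v_1):=\sum_jk_jv_{sj}(v_1)$ on $\mathcal U_{s-1}$. Freezing all coordinates of $v_1$ but $v_{1j^*}$, Step~1 gives $\bigl|\partial_{v_{1j^*}}\phi_{k,s}-k_{j^*}\bigr|\le(\max_j|k_j|)\,\kappa\le|k|\,\epsilon^{1/2}\le\frac12$, so $\phi_{k,s}$ is strictly monotone in $v_{1j^*}$ with $|\partial_{v_{1j^*}}\phi_{k,s}|\ge\frac12$ on its domain; hence the set of $v_{1j^*}$ with $|\phi_{k,s}|<\delta$ lies in an interval of length $\le4\delta$, and by Fubini
\begin{align*}
\mathrm{mes}\Bigl\{v_1:\ |\phi_{k,s}(v_1)|<\tfrac{\epsilon^{\alpha}}{N\Delta^2(k)|k|^{\Delta(k)+1}}\Bigr\}\ \le\ \frac{4\,\epsilon^{\alpha}}{N\,\Delta^2(k)\,|k|^{\Delta(k)+1}}.
\end{align*}
Let $\mathfrak R(j_0)$ be the union of these sets over $1\le s\le M$ and all admissible $k$; outside $\mathfrak R(j_0)$ every $v_s$, $1\le s\le M$, satisfies \eqref{122106} for all $k$ used in the construction. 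Grouping the admissible $k$ by the leftmost point of $\mathrm{supp}\,k$ (at most $5N$ choices), by $d:=\Delta(k)\ge1$ and by $\ell:=|k|$ — there being at most $(2\ell+1)^{d+1}$ of them for fixed leftmost point, $d$ and $\ell$, with $d,\ell\le M+2\le\sqrt N+1$ — one gets
\begin{align*}
\mathrm{mes}(\mathfrak R(j_0))\ \le\ \sum_{s=1}^{M}\sum_{k}\frac{4\,\epsilon^{\alpha}}{N\Delta^2(k)|k|^{\Delta(k)+1}}\ \le\ \frac{4\,\epsilon^{\alpha}}{N}\,M\,(5N)\sum_{d\ge1}\frac1{d^2}\sum_{\ell=1}^{\sqrt N+1}\Bigl(\tfrac{2\ell+1}{\ell}\Bigr)^{d+1}\ \le\ C\,\epsilon^{\alpha}\,N\,3^{\sqrt N},
\end{align*}
using $(2\ell+1)/\ell\le3$ and $\sum_{d\ge1}d^{-2}<\infty$. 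Since $N$ is polylogarithmic in $\epsilon^{-1}$ and, by \eqref{051802}, $3^{\sqrt N}=\exp\!\bigl(\tfrac{|\ln\epsilon|\ln3}{200\ln|\ln\epsilon|}\bigr)=\epsilon^{-\ln3/(200\ln|\ln\epsilon|)}$, we have $N\,3^{\sqrt N}=\epsilon^{-o(1)}$ as $\epsilon\to0$; therefore $\mathrm{mes}(\mathfrak R(j_0))\le\epsilon^{\alpha-o(1)}\le\epsilon^{\alpha/2}$ for $\epsilon<\epsilon(\alpha)$, which proves the lemma.

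\textbf{Main difficulty.} The heart of the matter is Step~1: one must make the accumulated drift $v_s-v_1$ not just small but \emph{Lipschitz-small in $v_1$}, uniformly over all $M\sim\sqrt N$ steps — exactly what the $\partial_{v_j}$-part $\left\|\cdot\right\|_{j_0,N,r}^{\mathcal L}$ of the norm in Definition~\ref{122403} is designed to propagate through the Iterative Lemma — while simultaneously keeping the number of resonance vectors $k$ that occur small enough that, after paying the unavoidable single-coordinate factor $\epsilon^{\alpha}$ of a single sub-level set, the sum is still $\le\epsilon^{\alpha/2}$. These two requirements fix the admissible relation between $N$ and $\epsilon$, and the whole force of the scaling \eqref{051802} is that every combinatorial or $N$-dependent loss is either a sub-polynomial factor $\epsilon^{-o(1)}$ (e.g. $3^{\sqrt N}$, $N$, $M$, $|A(j_0,N)|$) or a small fixed power $\epsilon^{-O(1)}$ (e.g. $N^{C\sqrt N}$) — small enough to be beaten by $\epsilon^{\alpha}$, respectively by $\epsilon^{1-2\alpha}$ — thanks to the $\ln|\ln\epsilon|$ in the denominator of $\sqrt N$; here the denominator $|k|^{\Delta(k)+1}$ in \eqref{122106} is precisely what turns the $(2\ell+1)^{d+1}$ resonance vectors into a harmless factor $3^{\sqrt N}$. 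A minor technical point is that $v_s$ is defined only on the open set $\mathcal U_{s-1}$; since the sub-level estimate uses only monotonicity along one coordinate, this causes no difficulty.
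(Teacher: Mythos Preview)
Your proof is correct and shares the paper's overall architecture --- define $\mathfrak R(j_0)$ as the union over $s$ and admissible resonance vectors of the sub-level sets of $\phi_{k,s}$, and use the $\mathcal L$-part of the norm in Definition~\ref{122403} to control how $v_s$ depends on $v_1$ --- but the implementation differs in two places. First, to pull the measure estimate back from $v_s$-coordinates to $v_1$, the paper does a change of variables: from $\|\partial(v_s-v_1)/\partial v_1\|_{\ell^2\to\ell^2}\le 40N^2r^3\epsilon$ (Schur's test on the $\mathcal L$-bound $|||W_s|||\le 20Nr^3\epsilon$) it gets $e^{-1}\le|\det(\partial v_s/\partial v_1)|\le e$ on the relevant $(2N{+}2)$-dimensional block, so ${\rm mes}(\mathfrak R_s(n))\le e\cdot{\rm mes}(\mathfrak R_s'(n))$. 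You instead prove the directional bound $\sum_j|\partial_{v_{1j^*}}(v_{sj}-v_{1j})|\le\epsilon^{1/2}$ and run a one-variable monotonicity/Fubini argument; this is the other standard way to do small-divisor excision and is arguably more elementary (no determinant, no local-diffeomorphism issue on the open set $\mathcal U_{s-1}$). Second, your combinatorial count is cruder: you pay a factor $3^{\sqrt N}=\epsilon^{-o(1)}$, whereas the paper uses the sharper count $\#\{n:\Delta(n)=a,\,|n|=b\}\le C(b+N_{s+1})b^{a}$, which cancels against the $|n|^{\Delta(n)+1}$ in \eqref{122106} and leaves only a polylog factor, ending with ${\rm mes}(\mathfrak R(j_0))\le C|\ln\epsilon|^{5}\epsilon^{\alpha}$. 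Both bounds are $\le\epsilon^{\alpha/2}$ for small $\epsilon$, so the difference is cosmetic; your observation that the denominator $|k|^{\Delta(k)+1}$ is precisely what tames the counting is the right point, and the paper simply exploits it more fully.
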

\begin{rem}
Let us comment on the definition and nonresonant properties of $v_s$ first. Assume $v_1=(v_{1j})_{j\in\Z}$ satisfies the nonresonant condition (\ref{122106}). Then using Lemma \ref{62} yields a modulated frequency $v_2=(v_{2j})_{j\in\Z}$ which depends on $v_1$. At this stage, $v_2$ may not satisfy the nonresonant condition (\ref{122106}). To propagate the Iterative Lemma, one can make further restrictions on $v_1$ so that $v_2$ satisfies (\ref{122106}). Repeating this procedure and removing more $v_1$ can ensure all $v_s$ ($1\leq s\leq M<\sqrt{N}-1$) satisfy (\ref{122106}).  The detailed proof is postponed to the next section.
\end{rem}}

Let $v=v_1=(v_{1j})_{j\in\Z}\in[0,1]^{\Z}\setminus\mathfrak{R}(j_0)$.  Then applying the Iterative Lemma gives
\begin{thm}[\textbf{Birkhoff Normal Form}]\label{011501} Consider the Hamiltonian \eqref{011410} and assume $v=v_1=(v_{1j})_{j\in\Z}\in[0,1]^{\Z}\setminus\mathfrak{R}(j_0)$. Given any $r>2$, then there exists an $\epsilon^*(r,\alpha)>0$ such that, for any $0<\epsilon<\epsilon^*(r,\alpha)$ and any $M\in\mathbb{N}$ with $M<\sqrt{N}-1$, there exists a symplectic transformation $\Gamma=\Gamma_1\circ\cdots\circ\Gamma_M$
such that
\begin{align*}
\widetilde H&=H_1\circ \Gamma=\widetilde{D}+\widetilde{Z}+\widetilde{R}\\
&=\frac12\sum_{j\in\mathbb{Z}}\widetilde v_{j}|q_j|^2+\sum_{n\in\mathbb{N}^{\mathbb{Z}}\times\mathbb{N}^{\mathbb{Z}}\atop n\in\mathcal{N},|n|\geq4}\widetilde Z(n)\prod_{{\rm supp}\ n}q_j^{n_j}\bar q_j^{n_j'}\\
&\ \ \ \ +\sum_{n\in\mathbb{N}^{\mathbb{Z}}\times\mathbb{N}^{\mathbb{Z}}}\widetilde R(n)\prod_{{\rm supp}\ n}q_j^{n_j}\bar q_j^{n_j'},
\end{align*}
where
\begin{align}
&\label{030401} \left|\left|\left|\widetilde {Z}\right|\right|\right|_{j_0,N,r/2}\leq 20Nr^3\epsilon,\\
&\label{030402} \left|\left|\left|\widetilde R\right|\right|\right|_{j_0,N,r/2}\leq 20Nr^3\epsilon,
\end{align}
and
\begin{align}
\label{030403}&\left|\left|\left|\widetilde{\mathcal{R}}\right|\right|\right|_{j_0,N,r/2}\leq10Nr^3\epsilon\cdot \left({(10(M+1))^{10(M+1)}\cdot 2^6e}\cdot N^{3(M+1)+1}r^2\epsilon^{1-2\alpha}\right)^{M},
\end{align}
with
\begin{align}
\label{042301}&\widetilde{\mathcal{R}}=\sum_{n\in\mathbb{N}^{\mathbb{Z}}\times\mathbb{N}^{\mathbb{Z}}}\widetilde{R}(n)\prod_{{\rm supp}\ n\cap A(j_0,N/2)\neq \emptyset}q_j^{n_j}\bar q_j^{n_j'}.
\end{align}
Furthermore, for any $A\geq 3$ the following estimate holds
\begin{align}
\nonumber&\left|\left|\left|\sum_{\Delta(n)+|n|=A}\left(|\widetilde{Z}(n)|+|\widetilde{R}(n)|\right)\prod_{{\rm supp}\ n}q_j^{n_j}\bar q_j^{n_j'}\right|\right|\right|_{j_0,N,r/2} \\
\label{042302}\leq& 10Nr^3\epsilon\left( {(10(M+1))^{10(M+1)}\cdot 2^6e}\cdot N^{3(M+1)+1}r^2\epsilon^{1-2\alpha}\right)^{A-3}.
\end{align}
\end{thm}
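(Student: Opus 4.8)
The plan is to obtain Theorem~\ref{011501} by iterating the Iterative Lemma (Lemma~\ref{62}) exactly $M$ times, starting from the base step $s=1$ provided by Lemma~\ref{122501}. The chain of symplectic maps $\Gamma=\Gamma_1\circ\cdots\circ\Gamma_M$ then produces $\widetilde H=H_1\circ\Gamma=H_{M+1}$, and we set $\widetilde D=D_{M+1}$, $\widetilde Z=Z_{M+1}$, $\widetilde R=R_{M+1}$, $\widetilde{\mathcal R}=\mathcal R_{M+1}$ (with the cutoff at $A(j_0,N_{M+2})$, which contains $A(j_0,N/2)$ since $N_{M+2}=N-20(M+1)\geq N/2$ for $M<\sqrt N-1$; this justifies replacing $N_{M+2}$ by $N/2$ in \eqref{042301}). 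To run the induction I first need to verify that the hypotheses \eqref{011412}, \eqref{122601..}, \eqref{122603..}, \eqref{122602..}, \eqref{122605.} of Lemma~\ref{62} hold at each step $1\leq s\leq M$, and then the conclusions \eqref{122610}--\eqref{122617} advance them from $s$ to $s+1$.

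The verification of hypotheses proceeds as follows. The nonresonant condition on $v_s$ for all $1\leq s\leq M$ is exactly the content of Lemma~\ref{lem101}, granted because $v_1\in[0,1]^{\Z}\setminus\mathfrak R(j_0)$ and $M<\sqrt N-1$; this is where the measure estimate ${\rm mes}(\mathfrak R(j_0))\leq\epsilon^{\alpha/2}$ enters, and it will be the source of the probability bound in the main theorem. The smallness condition \eqref{011412} must be checked once and for all from the choice $N=\left|\frac{\ln\epsilon}{200\ln|\ln\epsilon|}\right|^2$ in \eqref{051802} together with $M<\sqrt N-1$ and $\sigma=r/(2M)$ (so that after $M$ steps one has used up total width $M\sigma=r/2$, leaving radius $r/2$ as claimed); the point is that $(10(s+1))^{10(s+1)}N^{3(s+1)}\leq (10\sqrt N)^{10\sqrt N}N^{3\sqrt N}$ is subexponential in $N$, while $\epsilon^{1-2\alpha}$ beats it because $\sqrt N\ln N\sim\frac{|\ln\epsilon|}{100}$ — this is precisely the ``optimal choice'' balancing and it forces $\epsilon<\epsilon^*(r,\alpha)$. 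The structural hypotheses \eqref{122601..}--\eqref{122605.} at step $s$ are literally the conclusions \eqref{122616}--\eqref{122617} of the previous step (with \eqref{122507}--\eqref{122605} of Lemma~\ref{122501} serving as the $s=2$ input), so no separate work is needed beyond noting that $\sum_{i=0}^{s-1}2^{-i}<2$, which keeps $|||Z_s|||,|||R_s|||\leq 20Nr^3\epsilon$ uniformly — this gives \eqref{030401} and \eqref{030402}.

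For the remainder estimate \eqref{030403}, I track $|||\mathcal R_{s}|||$ through the recursion \eqref{122611}: at the final step $s=M$ this reads
\begin{align*}
|||\mathcal R_{M+1}|||_{j_0,N,r/2}\leq 10Nr^3\epsilon\left(\frac{(10(M+1))^{10(M+1)}\cdot 2^6e}{\sigma}\cdot N^{3(M+1)}r^3\epsilon^{1-2\alpha}\right)^{M},
\end{align*}
and substituting $\sigma=r/(2M)\geq r/(2\sqrt N)$ converts the $\sigma^{-1}$ factor into an extra $N^{1/2}\cdot r^{-1}$, absorbed into the stated form $(10(M+1))^{10(M+1)}\cdot 2^6 e\cdot N^{3(M+1)+1}r^2\epsilon^{1-2\alpha}$ after harmlessly enlarging constants. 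The per-degree bound \eqref{042302} follows identically from \eqref{122617} at $s=M$.

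The main obstacle I anticipate is not any single estimate but the bookkeeping of the three shrinking parameters simultaneously: the analyticity radius $r-(s-1)\sigma$, the barrier width $N_{s}=N-20(s-1)$, and the growing combinatorial prefactors $(10s)^{10s}N^{3s}$. One must choose $\sigma$ (depending on $M$, hence on $N$, hence on $\epsilon$) so that all $M$ applications of Proposition~\ref{090603} and Proposition~\ref{E1} remain valid — in particular the ``$e/\sigma$'' losses in Remark~\ref{092403} compound to $(e/\sigma)^M$, which is why $\sigma$ cannot be taken too small — while still leaving radius $r/2$ at the end and while keeping \eqref{011412} satisfiable. Getting this balance to close is exactly the technical heart that distinguishes the present long-time result from Wang--Zhang's polynomial one, and it is forced by the specific value of $N$ in \eqref{051802}; once $\sigma=r/(2M)$ is fixed, everything else is a routine, if lengthy, induction.
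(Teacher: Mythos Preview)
Your proposal is correct and follows essentially the same approach as the paper: verify the base step via Lemma~\ref{122501}, invoke Lemma~\ref{lem101} to guarantee the nonresonant condition for all $v_s$, check that \eqref{011412} holds uniformly in $s$ from the choice \eqref{051802} of $N$, and then iterate Lemma~\ref{62} $M$ times. The only difference is cosmetic: the paper takes $\sigma=r/(2N)$ rather than your $\sigma=r/(2M)$, so that $r-M\sigma\geq r(1-\tfrac{M}{2N})\geq r/2$ and $1/\sigma=2N/r$ produces exactly the extra factor $N$ in $N^{3(M+1)+1}$ appearing in \eqref{030403}; your choice gives a slightly sharper $N^{1/2}$ which you then harmlessly enlarge, and both choices satisfy \eqref{011412} for $\epsilon<\epsilon^*(r,\alpha)$.
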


\begin{proof}
First of all, note that the Hamiltonian (\ref{011410}) satisfies all assumptions (\ref{122601..})--(\ref{122605.}) for $s=1$, which follows from (\ref{122502}).

Secondly, for given $r>2$ and $0<\alpha<1/100$, we take $\epsilon^*=\epsilon^{*}(r,\alpha)>0$ such that
\begin{align*}
\left(10N^*\right)^{20{N^*}}\cdot {2^{10}er^2}\cdot (\epsilon^*)^{1-2\alpha}\leq \frac12,
\end{align*}
where
\begin{align*}
N^{*}=\left|\frac{\ln \epsilon^*}{200\ln\left|\ln\epsilon^*\right|}\right|^2.
\end{align*}
Then for any $0<\epsilon<\epsilon^*(r,\alpha)$ and any $1\leq s\leq M$, the assumption (\ref{011412}) holds with $\sigma=\frac{r}{2N}$. Moreover, one has
\begin{equation*}
r-s\sigma\geq r-M\cdot \frac{r}{2N}\geq r/2.
\end{equation*}
In view of (\ref{011601}) and for any $1\leq s\leq M$, one has
\begin{equation*}
N_{s+1}=N-20s\geq N-20M=N-20\sqrt{N}\geq \frac{N}2,
\end{equation*}
which implies
\begin{equation*}
\left[j_0-\frac{N}2,j_0+\frac{N}2\right]\subset A(j_0,N_s)\subset\left[j_0-{N},j_0+{N}\right].
\end{equation*}

Finally, it follows from Lemma \ref{lem101}  that all $v_s$ ($1\leq s\leq M$) satisfy the  $(\epsilon,\alpha,N)$-nonresonant condition (\ref{122106}). Then by using Iterative Lemma, one can find a symplectic transformation $\Gamma=\Gamma_1\circ\cdots\circ\Gamma_M$ such that
\begin{align*}
\widetilde H:=H_{M+1}=H_1\circ \Gamma,
 \end{align*}which satisfies (\ref{030401}), (\ref{030402}) and (\ref{030403}).
\end{proof}

\section{Estimate on the measure}
In this section, we complete the proof of Lemma \ref{lem101}

\begin{proof}[Proof of Lemma \ref{lem101}]
Given $N>0, j_0\in\N, n\in\N^{\Z}$ and $1\leq s\leq \left[\sqrt{N}\right]-1$, define the resonant set $\mathfrak{R}_s(n)$ by
\begin{align*}
\mathfrak{R}_s(n)=\left\{v_1=(v_{1j})_{j\in\mathbb{Z}}\in[0,1]^{\Z}:\ \left|\sum_{j\in\mathbb{Z}}(n_j-n_j')v_{sj}\right|<\frac{\epsilon^{\alpha}}{N\Delta^2(n)|n|^{\Delta(n)+1}}\right\}.
\end{align*}
Let
\begin{align*}
\mathfrak{R}(j_0)=\bigcup_{s=1}^{\left[\sqrt{N}\right]-1} \mathfrak{R}_s(j_0),
\end{align*}
where $\mathfrak{R}_s(j_0)=\cup_{n}^{s(*)}\mathfrak{R}_{s}(n)$
and the union $\bigcup_{n}^{s(*)}$ is taken  for $n$ satisfying ${\rm supp}\ n\cap A(j_0,N_{s+1})\neq \emptyset$ and $\Delta (n)+|n|\leq s+2.$
Obviously, we have the counting bound£º
\begin{align}
\nonumber&\#\{n:\ {\rm supp}\ n\cap A(j_0,N_{s+1})\neq \emptyset,\  \Delta (n)=a, |n|=b\}\\
\label{count}&\ \ \ \leq C(b+N_{s+1})b^a,
\end{align}
where $C>0$ is some absolute constant.

It is easy to see that 
\begin{align}\label{rsp101}
{\rm mes}\left(\mathfrak{R}_s'(n)\right)\leq \frac{C\epsilon^{\alpha}}{N\Delta^2(n)|n|^{\Delta(n)+1}},
\end{align}
where
\begin{align*}
\mathfrak{R}_s'(n)=\left\{v_s:\ \left|\sum_{j\in\mathbb{Z}}(n_j-n_j')v_{sj}\right|<\frac{\epsilon^{\alpha}}{N\Delta^2(n)|n|^{\Delta(n)+1}}\right\}.
\end{align*}

Consider first the case $s=1$. Then we have by \eqref{count} and \eqref{rsp101}
\begin{align}
\nonumber \mbox{\rm mes} (\mathfrak{R}_1(j_0))&\leq \sum_{{\rm supp}\ n\cap A(j_0,N_{2})\neq \emptyset\atop
\Delta (n)+|n|\leq 3}{\rm mes} (\mathfrak{R}_1(n))\\
\nonumber&\leq C\epsilon^{\alpha} \sum_{{\rm supp}\ n\cap A(j_0,N_{2})\neq \emptyset\atop
\Delta (n)+|n|\leq 3}\frac{1}{N\Delta^2(n)|n|^{\Delta(n)+1}}\\
\label{r1j0}
&\leq  C\frac{N_2}{N}\epsilon^{\alpha}.
\end{align}

For $2\leq s\leq M$, let $w^{(s)}=\left(w^{(s)}_j\right)_{j\in\mathbb{Z}}$ with $w^{(s)}_j=v_{sj}-v_{1j}$ and $W_s=\sum\limits_{j\in\mathbb{Z}}w^{(s)}_jq_j\bar q_j$. One sees that $v_{sj}-v_{1j}=0$ unless $\left|\left|j\right|-j_0\right|\leq N+1$. Moreover, in view of (\ref{122602..}), one has
\begin{align*}
\left|\left|\left|W_s\right|\right|\right|_{j_0,N,r-(s-1)\sigma}\leq 20Nr^{3}\epsilon.
\end{align*}
From Schur's test,
\begin{align}\label{042101}
\left|\left|\frac{\partial w^{(s)}}{\partial v_1}\right|\right|_{\ell^2\rightarrow\ell^2}\leq 40Nr^{3}\epsilon(s+2)\leq 40N^2r^3\epsilon,
\end{align}
as $\Delta(n)\leq s+2$ and $s\leq M<N$. Moreover, (\ref{042101}) implies that the frequency modulation map $v_1\rightarrow  v_s=v_1+w^{(s)}$ satisfies
\begin{align*}
e^{-1}\leq \left(1-40N^2r^3\epsilon\right)^{2N+2}\leq \left|\det \frac{\partial  v_s}{\partial v_1}\right|\leq \left(1+40N^2r^3\epsilon\right)^{2N+2}\leq e.
\end{align*}
Hence, one has by \eqref{rsp101} and \eqref{042101}
\begin{align*}
{\rm mes}(\mathfrak{R}_s(n))\leq e\cdot{\rm mes}(\mathfrak{R}_s'(n))\leq  \frac{C\epsilon^{\alpha}}{N\Delta^2(n)|n|^{\Delta(n)+1}}.
\end{align*}
Similar to the proof of \eqref{r1j0}, we have
\begin{align*}
{\rm mes}(\mathfrak{R}_s(j_0))\leq Cs^4\frac{N_{s+1}}{N}\epsilon^{\alpha}.
\end{align*}

Finally, by recalling \eqref{051802}, we obtain
\begin{align*}
{\rm mes}(\mathfrak{R}(j_0))&\leq\sum_{s=1}^{[\sqrt{N}]-1}{\rm mes}(\mathfrak{R}_s(j_0))\leq C|\log\epsilon|^5\epsilon^\alpha\leq \epsilon^{\alpha/2}.
\end{align*}
This  finishes the proof of Lemma \ref{lem101}.
\end{proof}

\section{Proof of main theorem}

Now we are in a position to complete the proof of Theorem \ref{main}.

\begin{proof}[Proof of Theorem \ref{main}]
In view of Theorem \ref{011501}, one obtains
the  $\widetilde H(\tilde q,\bar{\tilde q})$ in new coordinates. Then the new Hamiltonian equation is given by
\begin{align}\label{011503}
\textbf i \dot{\tilde q}=2\frac{\partial\widetilde{H}}{\partial \bar{\tilde q}}.
\end{align}
We get by using (\ref{011503}) that
\begin{align*}
\frac{d}{dt}\sum_{|j|>j_0}\left|\tilde q_j(t)\right|^2=&\left\{\sum_{|j|>j_0}\left|\tilde q_j(t)\right|^2,\widetilde D+\widetilde Z+\widetilde R\right\}\\
=&\left\{\sum_{|j|>j_0}\left|\tilde q_j(t)\right|^2,\widetilde R\right\}\\=&4\mbox{Im}  \sum_{|j|>j_0}\bar{\tilde q}_j(t)\frac{\partial\widetilde{R}}{\partial \bar{\tilde q}}\\
=&\sum_{n\in\mathbb{N}^{\mathbb{Z}}\times\mathbb{N}^{\mathbb{Z}}}
\widetilde{R}(n)\sum_{|j|>j_0}(n_j-n_j')\prod_{{\rm supp}\ n}{\tilde q_j}^{n_j}\bar {\tilde q}_j^{n_j'}.
\end{align*}
In view of (\ref{042301}), we decompose $\widetilde R$ into  three parts:
\begin{align*}
\widetilde R=\widetilde {{R}}^{(1)}+ \widetilde {{R}}^{(2)}+\widetilde {{R}}^{(3)},
\end{align*}
where
\begin{align}
\nonumber&\widetilde {{R}}^{(1)}=\widetilde{\mathcal{R}},\\
\nonumber&\widetilde {{R}}^{(2)}=\sum_{n\in\mathbb{N}^{\mathbb{Z}}\times\mathbb{N}^{\mathbb{Z}}}
\widetilde{R}(n)\sum_{|j|>j_0}(n_j-n_j')\prod_{{\rm supp}\ n\cap A(j_0,N/2)=\emptyset\atop \Delta(n)\geq M+4}{\tilde q_j}^{n_j}\bar {\tilde q}_j^{n_j'},\\
\label{042304}&\widetilde {{R}}^{(3)}=\sum_{n\in\mathbb{N}^{\mathbb{Z}}\times\mathbb{N}^{\mathbb{Z}}}
\widetilde{R}(n)\sum_{|j|>j_0}(n_j-n_j')\prod_{{\rm supp}\ n\cap A(j_0,N/2)=\emptyset\atop \Delta(n)\leq M+3}{\tilde q_j}^{n_j}\bar {\tilde q}_j^{n_j'}.
\end{align}
Using (\ref{030403}) and (\ref{042302}) implies
\begin{align*}
\left|\left|\left|\widetilde{{R}}^{(1)}+\widetilde{{R}}^{(2)}\right|\right|\right|_{j_0,N,r/2}\leq20Nr^3\epsilon\cdot \left({(10(M+1))^{10(M+1)}\cdot 2^6e}\cdot N^{3(M+1)+1}r^2\epsilon^{1-2\alpha}\right)^{M}.
\end{align*}
Take
\begin{align*}
M=\left[\sqrt{N}\right]-1\approx\left|\frac{\ln \epsilon}{200\ln\left|\ln\epsilon\right|}\right|.
\end{align*}
Then one has
\begin{align}\label{042303}
\left|\left|\left|\widetilde{{R}}^{(1)}+\widetilde{{R}}^{(2)}\right|\right|\right|_{j_0,N,r/2}\leq \epsilon\cdot \exp\left(-\frac{\left|\ln \epsilon\right|^2}{200\ln\left|\ln\epsilon\right|}\right),
\end{align}
where we use $0<\alpha<\frac1{100}$ and $\epsilon\ll1$.

Now consider the monomials in $\widetilde{{R}}^{(3)}$. Recalling that
\begin{align*}
\Delta(n)\leq M+3<2\sqrt{N},
\end{align*}
if ${\rm supp}\ n\cap A(j_0,N/2)= \emptyset$, then
\begin{align*}
\mbox{supp}\ n\subset\left(-\infty,-j_0\right)
\cup\left(j_0,\infty\right).
\end{align*}
Hence the terms in (\ref{042304}) satisfy
\begin{align}\label{042305}
\sum_{|j|>j_0}(n_j-n_j')=0
\end{align}
Using (\ref{042303}) and (\ref{042305}), one has
\begin{align*}
\frac{d}{dt}\sum_{|j|>j_0}\left|\tilde q_j(t)\right|^2\leq \epsilon\cdot \exp\left(\frac{\left|\ln \epsilon\right|^2}{200\ln\left|\ln\epsilon\right|}\right).
\end{align*}

Integrating in $t$, we obtain
 \begin{equation}\label{051801}
 \sum_{|j|>j_0}\left|\tilde q_j(t)\right|^2\leq \sum_{|j|>j_0}\left|\tilde q_j(0)\right|^2+\epsilon\cdot \exp\left(-\frac{\left|\ln \epsilon\right|^2}{200\ln\left|\ln\epsilon\right|}\right)t.
 \end{equation}

 Note that the symplectic transformation only acts on the $N$-neighborhood of $\pm j_0$.  We obtain
 \begin{align*}
 \sum_{|j|>j_0+N}|q_j(t)|^2\leq\sum_{|j|>j_0}|\tilde q_j(t)|^2,
 \end{align*}
 which together with (\ref{051801}) gives
 \begin{align*}
 \sum_{|j|>j_0+N}|q_j(t)|^2\leq \sum_{|j|>j_0}\left|\tilde q_j(0)\right|^2+\epsilon\cdot \exp\left(-\frac{\left|\ln \epsilon\right|^2}{200\ln\left|\ln\epsilon\right|}\right)t.
 \end{align*}

 On the other hand, the Hamiltonian preserves the $\ell^2$-norm. So we have
 \begin{align*}
 \sum_{|j|>j_0}|\tilde q_j(0)|^2=\sum_{j\in\mathbb{Z}}|q_j(0)|^2-\sum_{|j|\leq j_0}|\tilde q_j(0)|^2<\sum_{|j|\leq j_0-N}|q_j(0)|^2.
 \end{align*}
 Choosing $\bar{j}_0$ large enough and letting $j_0\in[\bar{j}_0,2\bar{j}_0 ]$ such that
 \begin{align*}
 \sum_{|j|>j_0-N}|q_j(0)|^2<\delta,
 \end{align*}
 then for
 \begin{align*}\label{051803}
 |t|\leq \delta\cdot \exp\left(\frac{\left|\ln \epsilon\right|^2}{200\ln\left|\ln\epsilon\right|}\right),
 \end{align*}one has
 \begin{align*}
 \sum_{|j|>j_0+N}|q_j(0)|^2\leq 2\delta.
 \end{align*}

\end{proof}

\section*{Acknowledgments}

   H.C.  was supported by NNSFC No. 11671066, No. 11401041 and NSFSP No. ZR2019MA062. Y.S.  was supported by NNSFC No.  11901010 and   Z.Z. was  supported by NNSFC
No. 11425103. The authors are very grateful to the anonymous referees for valuable suggestions.

\newcommand{\etalchar}[1]{$^{#1}$}

\end{document}